\newtheorem{Theorem}{Theorem}
\newtheorem{Lemma}{Lemma}
\newtheorem{Remark}{Remark}
\newtheorem{Assumption}{Assumption}
\newcommand{\m}[1]{\mathbf{#1}}
\newcommand{\mc}[1]{\mathcal{#1}}
\newcommand{\mb}[1]{\mathbb{#1}}
\newcommand{\abs}[1]{\lVert{#1} \rVert}
\newcommand\numberthis{\addtocounter{equation}{1}\tag{\theequation}}
	\tikzstyle{frame} = [draw, -latex]
	\tikzstyle{line} = [draw]
	\tikzstyle{line2} = [draw, dashdotted]
	\tikzstyle{line3} = [draw, dashed]
	\tikzstyle{line3UD} = [draw, dashed]
	\tikzstyle{place} = [circle, draw=black, fill=white, thick, inner sep=2pt, minimum size=1mm]
	\tikzstyle{place2} = [circle, draw=black, fill=black, thick, inner sep=2pt, minimum size=1mm]
	\tikzstyle{placeRed} = [circle, draw=red, fill=red, thick, inner sep=2pt, minimum size=1mm]
	\tikzstyle{vertex} = [circle, draw=black, fill=black, thick, inner sep=2pt, minimum size=1mm]
\newcommand{\RightAngle}[4][7pt]{%
        \draw ($#3!#1!#2$)
        --($ #3!2!($($#3!#1!#2$)!.5!($#3!#1!#4$)$) $)
        --($#3!#1!#4$) ;
        }
\title{\LARGE \bf
Discrete-Time Matrix-Weighted Consensus
}
\author{Quoc Van Tran, Minh Hoang  Trinh and Hyo-Sung Ahn
\thanks{This work was supported by  the National Research Foundation (NRF) of Korea under the grant NRF-2017R1A2B3007034.}
\thanks{Q. V. Tran and H.-S. Ahn are with the School of Mechanical Engineering,
        Gwangju Institute of Science and Technology, Gwangju 61005, Republic of Korea.
        E-mails: {\tt\small $\{$tranvanquoc, hyosung$\}$@gist.ac.kr}}
\thanks{M. H. Trinh is with the Department of Automatic Control, School of Electrical Engineering, Hanoi University of Science and Technology (HUST), Hanoi, Vietnam.
        E-mail: {\tt\small minh.trinhhoang@hust.edu.vn}}
}
\begin{document}

\maketitle
\thispagestyle{empty}
\pagestyle{empty}

\begin{abstract}
This article investigates discrete-time matrix-weighted consensus of multi-agent networks over undirected and connected graphs. We first present consensus protocols for the agents in common networks of symmetric matrix weights with possibly different update rates and switching network topologies. A special type of matrix-weighted consensus with non-symmetric matrix-weights that can render several consensus control scenarios such as ones with scaled/rotated updates and affine motion constraints is also considered. We employ Lyapunov stability theory for discrete-time systems and occasionally utilize Lipschitz continuity of the gradient of the Lyapunov function to show the convergence to a consensus of the agents in the system. Finally, simulation results are provided to illustrate the theoretical results.
\end{abstract}

\section{Introduction}
The task of reaching a consensus on local decision states of multiple agents in a system in a distributed fashion is a fundamental problem in many distributed algorithms over networked systems, such as coordination control \cite{Ren2007, Saber2004tac, Oh2015survey, Quoc2020tcns}, distributed optimization and machine learning \cite{KZhang2018icml, TYang2019, Quoc2020CDCextend}. In this context, each agent in the system holds a local (decision) state, which can be a scalar or a vector, and in order to reach a consensus each agent updates its state along the direction of a weighted sum of the relative states to its neighboring agents, which can be obtained via local inter-agent measurements or information exchanges.

Although consensus algorithms over scalar-weighted networks have been studied extensively, matrix-weighted consensus has been of particular interest recently. This is due to the fact that matrix-weights can capture inter-dependencies or impose cross-coupling constraints on the relative vectors of the agents, which is not achievable if only scalar weights are used. Therefore, systems with matrix-weights arise naturally in various disciplines of science and engineering including but not limited to matrix-weighted consensus/synchronization \cite{Tuna2016auto,Minh2018auto,LPan2019,Ahn2019auto}, opinion dynamics \cite{Friedkin2016Sci,Ahn2020socialSystems,Ben2020auto}, distributed control and estimation \cite{Barooah2008TSP,Zhao2018csm,Tuna2018tac}. In this line of research, our work in \cite{Minh2018auto} reveals that the existence of a positive spanning tree in the (undirected) graph of the system, i.e., a tree such that every edge weight is positive definite, is sufficient for the agents to achieve a consensus. In contrast, bipartite consensus can be achieved if the matrix-weighted graph is structurally balanced and contains a positive-negative spanning tree, whose edge weights are either positive or negative definite \cite{LPan2019}. There are also works in matrix-weighted consensus with time-varying network topologies \cite{LPan2020Arx} and directed graphs \cite{Chinnappa2019}.

The aforementioned works in matrix-weighted consensus have been investigated in continuous-time scenarios and often require that the interaction graph of the network has symmetric matrix-weights i.e., $\m{A}_{ij}=\m{A}_{ji}\geq 0$, where $\m{A}_{ij}$ and $\m{A}_{ji}$ are the matrix weights associated with two neighboring agents $i$ and $j$, respectively. However, discrete-time algorithms are relevant in discrete-time cyber-physical systems in which control and estimation algorithms are implemented in digital computers or micro-controllers. 
Furthermore, most of the existing distributed optimization and machine learning algorithms, and particularly, those based on (scalar-weighted) consensus, are in discrete-time setting \cite{TYang2019}. Therefore, in this work, we attempt to investigate discrete-time matrix-weighted consensus of multi-agent systems over undirected graphs.

The contributions of this paper are as follows. 
\begin{itemize}
\item We firstly study the discrete-time matrix-weighted consensus of multi-agent systems with undirected and connected graphs, in which the agents in the system can use different update rates or the same update rate. Furthermore, asymptotic convergence to the average consensus of the system with time-varying graph topologies is also established, provided that the union of the switching graphs over each successive time interval of the same length contains a positive spanning tree. The use of switching (matrix-weighted) graphs poses a mild assumption as it allows the network topology to be disconnected at any time instant and can be further utilized to reduce exchanged data per iteration significantly between two neighboring agents.
\item Secondly, consensus of the agents is examined when each agent $i$ in the system employs a same matrix weight $\m{A}_{ij}=\m{A}_{i}$ for every relative state to its neighbor $j$. Suppose that the interaction graph of the system is connected and the agents' update rates are sufficiently small. Then, when the matrix weight is (possibly non-symmetric) positive definite for all agents, which represent scaled consensus updates or small misalignments between the body-fixed coordinate systems of the agents, we show that the agents achieve a consensus. 
\item Thirdly, as an extension to the preceding case, we consider the case that the matrix weight $\m{A}_i$ associated with an agent $i$ can be positive semidefinite. We show that the state vector of agent $i$ is constrained in a linear subspace whose tangent space is spanned by the column space of $\m{A}_i$. Then, it is proven that if the intersection of the agents' subspaces is non-empty and the update rates of the agents are sufficiently small, the agents still achieve a consensus.
\item Finally, two simulation examples are provided to verify the theoretical development in the paper.
\end{itemize}

The rest of this paper is outlined as follows. Preliminaries and problem formulation are provided in Section \ref{sec:preliminary}. Section \ref{sec:unconstrained_consensus} presents consensus protocols for systems with symmetric matrix weights and possibly time-varying network topologies. The consensus over undirected networks with asymmetric matrix weights is investigated in Sections \ref{sec:asym_positive_weight} and \ref{sec:projection_weight}. Finally, simulation results are provided in Section \ref{sec:sim} and Section \ref{sec:conclusion} concludes this paper.

\section{Preliminaries and Problem Formulation}\label{sec:preliminary} 
\subsubsection*{Notation} Let $\mb{R}^d$ and $\mb{C}^d$ be the real and complex $d$-dimensional spaces, respectively. The set of nonnegative integers is $\mb{Z}^+$. The notation $||\cdot||$ denotes the Euclidean norm. Let $\mathrm{diag}(\m{A}_1,\ldots,\m{A}_n)\in \mb{R}^{N\times N},~N:=\sum_{i=1}^n d_i,$ be a block-diagonal matrix constructed from $\m{A}_1\in \mb{R}^{d_1\times d_1},\ldots,\m{A}_n\in \mb{R}^{d_n\times d_n}$. 
The Cartesian product of $\{\mc{X}_i\}_{i=1}^n\subseteq \mb{R}^d$ is denoted by $\prod_{i=1}^{n}\mc{X}_i$. The relation $\m{A}>0$ ($\m{A}\geq 0$) implies that the matrix $\m{A}$ is positive definite (positive semidefinite).
\subsection{Matrix Weighted Graph}
A \textit{matrix weighted graph} characterizing an interaction topology of a multi-agent network is denoted by $\mc{G}=(\mc{V},\mc{E},\mc{A})$, where, $\mc{V}=\{1,\ldots,n\}$ denotes the vertex set, $\mc{E}\subseteq\mc{V}\times \mc{V}$ denotes the set of edges of $\mc{G}$, and $\mc{A}=\{\m{A}_{ij}\in \mb{R}^{d\times d}:(i,j)\in \mc{E},\m{A}_{ij}\geq 0\}$. An edge is defined by the ordered pair $e_k=(i,j),i\neq j, k=1,\ldots,m,m=\vert \mathcal{E} \vert$. The graph $\mc{G}$ is said to be undirected if $(i,j)\in \mc{E}$ implies $(j,i)\in \mc{E}$, i.e. if $j$ is a neighbor of $i$, then $i$ is also a neighbor of $j$. If the graph $\mc{G}$ is directed, $(i,j)\in \mc{E}$ does not necessarily imply $(j,i)\in \mc{E}$. The set of neighboring agents of $i$ is denoted by $\mc{N}_i=\{j\in\mc{V}:(i,j)\in \mc{E}\}$. Associate with each edge $(i,j)\in \mc{E}$ the matrix weight $\m{A}_{ij}\geq 0$, and $\m{A}_{ij}= \m{0}$ when $(i,j)\not\in \mc{E}$. An edge $(i,j)$ is called to be \textit{positive definite} (\textit{positive semidefinite}) if $\m{A}_{ij}>0$ ($\m{A}_{ij}\geq0$). The \textit{matrix-weighted adjacency matrix} of $\mc{G}$ is given as $\m{A}=[\m{A}_{ij}]\in \mb{R}^{nd\times nd}$. 

We define $\m{D}_i:=\sum_{j\in \mc{N}_i}\m{A}_{ij}$ and let $\m{D}=\mathrm{diag}(\m{D}_1,\ldots,\m{D}_n)$ be the \textit{block-degree matrix} of the graph $\mc{G}$. Then, the \textit{matrix-weighted Laplacian} is given as $\m{L}=\m{D}-\m{A}\in \mb{R}^{nd\times nd}$. We denote $\m{L}^\mathrm{o}$ as the \textit{identity-matrix weighted Laplacian} of $\mc{G}$ with $\m{A}_{ij}=\m{I}_d$ for all $(i,j)\in \mc{E}$, and $\m{A}_{ij}= \m{0}$ otherwise.

When the matrix-weights in the graph are \textit{symmetric}, i.e., $\m{A}_{ij}=\m{A}_{ji}\geq 0,\forall (i,j)\in \mc{E}$, the following straightforwardly established lemma can be obtained \cite{Minh2018auto}.
\begin{Lemma} The matrix-weighted Laplacian $\m{L}$ is symmetric and positive semidefinite, and its null space is given as $\text{null}(\m{L})=\text{span}\{\text{range}(\m{1}_n\otimes \m{I}_d),\{\m{v}=[\m{v}_1^\top,\ldots,\m{v}_n^\top]^\top\in \mb{R}^{nd}:(\m{v}_j-\m{v}_i)\in \text{null}(\m{A}_{ij}),\forall (i,j)\in \mc{E}\}\}$.
\end{Lemma}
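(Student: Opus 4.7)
The plan is to prove the three claims of the lemma in order: symmetry, positive semidefiniteness, and the null-space characterization, all via direct algebraic manipulation of the quadratic form associated with $\m{L}$.

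First, I would address symmetry. The block-degree matrix $\m{D}=\mathrm{diag}(\m{D}_1,\ldots,\m{D}_n)$ is symmetric because each block $\m{D}_i=\sum_{j\in\mc{N}_i}\m{A}_{ij}$ is a sum of symmetric PSD matrices (each weight $\m{A}_{ij}\geq 0$ is symmetric by definition). For the matrix-weighted adjacency $\m{A}=[\m{A}_{ij}]$, the $(i,j)$-block transposes to $\m{A}_{ij}^\top=\m{A}_{ij}=\m{A}_{ji}$ by the symmetry hypothesis, which is exactly the $(j,i)$-block of $\m{A}$. Hence $\m{A}^\top=\m{A}$, and therefore $\m{L}=\m{D}-\m{A}$ is symmetric.

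Next, for positive semidefiniteness, I would pick an arbitrary $\m{v}=[\m{v}_1^\top,\ldots,\m{v}_n^\top]^\top\in\mb{R}^{nd}$ and expand
\begin{equation*}
\m{v}^\top\m{L}\m{v}=\sum_{i=1}^{n}\sum_{j\in\mc{N}_i}\m{v}_i^\top\m{A}_{ij}\m{v}_i-\sum_{i=1}^{n}\sum_{j\in\mc{N}_i}\m{v}_i^\top\m{A}_{ij}\m{v}_j.
\end{equation*}
Using $\m{A}_{ij}=\m{A}_{ji}$ to pair each unordered edge $(i,j)$ twice, the expression collapses to
\begin{equation*}
\m{v}^\top\m{L}\m{v}=\tfrac{1}{2}\sum_{(i,j)\in\mc{E}}(\m{v}_i-\m{v}_j)^\top\m{A}_{ij}(\m{v}_i-\m{v}_j)\geq 0,
\end{equation*}
since every $\m{A}_{ij}\geq 0$. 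This establishes $\m{L}\geq 0$.

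For the null-space characterization, I would exploit that for a symmetric PSD matrix, $\m{L}\m{v}=\m{0}$ if and only if $\m{v}^\top\m{L}\m{v}=0$. By the preceding identity and nonnegativity of each summand, this equality forces $(\m{v}_i-\m{v}_j)^\top\m{A}_{ij}(\m{v}_i-\m{v}_j)=0$ for every edge $(i,j)\in\mc{E}$. Using the factorization $\m{A}_{ij}=\m{B}_{ij}^\top\m{B}_{ij}$ (available since $\m{A}_{ij}$ is symmetric PSD), this is equivalent to $\m{B}_{ij}(\m{v}_j-\m{v}_i)=\m{0}$, i.e. $(\m{v}_j-\m{v}_i)\in\text{null}(\m{A}_{ij})$. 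Conversely, any such $\m{v}$ makes every summand vanish and thus lies in $\text{null}(\m{L})$. Finally, vectors in $\text{range}(\m{1}_n\otimes\m{I}_d)$ trivially satisfy $\m{v}_j-\m{v}_i=\m{0}\in\text{null}(\m{A}_{ij})$, so they are included as a (redundant) subset, and taking the span is harmless because the set of $\m{v}$ with $(\m{v}_j-\m{v}_i)\in\text{null}(\m{A}_{ij})$ is already a linear subspace.

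There is no real obstacle here; the proof is a standard bookkeeping exercise. The only subtlety worth highlighting is the equivalence $\m{x}^\top\m{A}\m{x}=0\Leftrightarrow\m{A}\m{x}=\m{0}$ for symmetric PSD $\m{A}$, which is needed to translate the vanishing of the quadratic form edge-by-edge into the null-space condition describing $\text{null}(\m{L})$.
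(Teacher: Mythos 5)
Your proof is correct; the paper itself offers no proof of this lemma, labelling it ``straightforwardly established'' and deferring to the cited reference, and your argument --- symmetry of $\m{D}-\m{A}$ from $\m{A}_{ij}=\m{A}_{ji}$, the edge-wise quadratic form $\m{v}^\top\m{L}\m{v}=\tfrac{1}{2}\sum_{(i,j)\in\mc{E}}(\m{v}_i-\m{v}_j)^\top\m{A}_{ij}(\m{v}_i-\m{v}_j)$, and the equivalence $\m{v}^\top\m{L}\m{v}=0\Leftrightarrow\m{L}\m{v}=\m{0}$ for symmetric PSD matrices --- is exactly the standard route taken there. No gaps; the one subtlety you flag (passing from vanishing of each summand to $(\m{v}_j-\m{v}_i)\in\text{null}(\m{A}_{ij})$ via $\m{A}_{ij}=\m{B}_{ij}^\top\m{B}_{ij}$) is handled correctly.
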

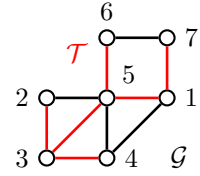
\begin{figure}[t]
\centering
\begin{tikzpicture}[scale=0.8]
\node[place] (4) at (1,0.) [label=right:$4$] {};
\node[place] (3) at (0,0) [label=left:$3$] {};
\node[place] (1) at (2,1.) [label=right:$1$] {};
\node[place] (2) at (0,1.) [label=left:$2$] {};
\node[place] (5) at (1.,1.) [label=above right:$5$] {};
\node[place] (6) at (1.,2) [label=above:$6$] {};
\node[place] (7) at (2,2) [label=right:$7$] {};
\node[] (g1) at (2.2,0.5) [label=below:$\mathcal{G}$] {};
\node[] (g1) at (0.5,2.2) [label=below: \textcolor{red}{$\mathcal{T}$}] {};

\draw (4) [line width=1pt] -- node [left] {} (5);
\draw (2) [line width=1pt] -- node [left] {} (5);
\draw (4) [line width=1pt] -- node [left] {} (1)  (6)--(7);

\draw [line width=1pt,red] (6) --(5) (5)--(3) (5)--(1) (1)--(7) (3)--(2) (3)--(4);

\end{tikzpicture}
\caption{A positive spanning tree $\mc{T}$ (in red) of $\mc{G}$ (\textcolor{red}{--} positive edges; -- positive or positive semidefinite edges).}
\label{fig:pos_spanning_tree}
\end{figure}
 A \textit{path} is positive if all the edges in the path are positive definite. A \textit{positive tree} is a graph in which any
two vertices are connected by exactly one path which is positive. A \textit{positive spanning tree} $\mc{T}$ of $\mc{G}$ is a positive tree containing all vertices in $\mc{V}$. When $\m{A}_{ij}$ and $\m{A}_{ji}$ are not necessarily equal, i.e., $\m{A}_{ij}\neq\m{A}_{ji}$, the graph $\mc{G}$ is said to have \textit{asymmetric matrix-weights}\footnote{The symmetry/asymmetry of the matrix weights of a graph $\mc{G}$, which is specified by whether $\m{A}_{ij}=\m{A}_{ji}, \forall (i,j)\in \mc{E}$, or not, should be distinguished from the symmetry of the positive semidefinite matrices $\m{A}_{ij}$.}.
\subsection{Problem Formulation}
Consider a system of $n$ agents in $\mb{R}^d,d\geq 2$, whose interaction graph $\mc{G}$ is undirected and connected. Each agent $i\in \mc{V}$ maintains a local vector $\m{x}_i\in \mb{R}^d$. Let each agent $i$ compute the relative vectors $(\m{x}_i-\m{x}_j)$ to its neighbors $j\in \mc{N}_i$, e.g., by assuming measurement capacity or by exchanging information with its neighbors. Intuitively, in order to reach a consensus, each agent $i$ in the network iteratively updates its local vector $\m{x}_i(k+1)$, at every iteration $k+1\geq 1$, by adding to it a matrix-weighted sum of the relative vectors, i.e., $-\sum_{j\in \mc{N}_i}\m{A}_{ij}(\m{x}_i(k)-\m{x}_j(k))$. Here, $\m{A}_{ij}\in \mb{R}^{d\times d}$ is a matrix weight associated with each edge $(i,j)\in \mc{E}$.

In particular, each agent $i\in \mc{V}$ can update $\m{x}_i(k+1)$ via
\begin{equation}\label{eq:unconstrained_update}
\m{x}_i(k+1)=\m{x}_i(k)-\alpha_i\sum_{j\in \mc{N}_i}\m{A}_{ij}(\m{x}_i(k)-\m{x}_j(k)),
\end{equation}
$\forall k\in \mb{Z}^+$, where $\alpha_i>0$ is a sufficiently small step size. In this work, we consider consensus control of the agents under two possible types of matrix weights $\m{A}_{ij}$:
\begin{enumerate}[({A}.1)]
\item Positive semidefinite and symmetric weights $\m{A}_{ij}=\m{A}_{ji}\geq 0, \forall (i,j)\in \mc{E}$ (Section \ref{sec:unconstrained_consensus}).
\item For every $i\in \mc{V}$, $\m{A}_{ij}=\m{A}_i\in \mb{R}^{d\times d}$ for all $j\in \mc{N}_i$. That is, each agent $i$ employs the same matrix weight $\m{A}_i$ for every relative vector $(\m{x}_i(k)-\m{x}_j(k)),\forall j\in \mc{N}_i$. In addition, the matrix $\m{A}_i$ is either positive definite (Section \ref{sec:asym_positive_weight}) or positive semidefinite (Section \ref{sec:projection_weight}) for all $i\in \mc{V}$, and it is also not required that $\m{A}_i=\m{A}_j$ for $i,j\in \mc{V}, i\neq j$.
\end{enumerate}
\section{Consensus under Symmetric Matrix-Weights}\label{sec:unconstrained_consensus}
In this section, we consider the consensus control for the system under the matrix-weighted consensus protocol \eqref{eq:unconstrained_update} with the matrix weights satisfying condition (A.1) above. Provided that the matrix-weighted graph $\mc{G}$ contains a positive spanning tree and the update rates are sufficiently small, we show that the agents achieve a consensus. Further, asymptotic convergence to an average consensus of the system under undirected switching graphs is also established.
\subsection{Matrix-weighted consensus law}
At an iteration $k=0,1,\ldots$, each agent $i$ updates its state vector $\m{x}_i(k)\in \mb{R}^d$ via \eqref{eq:unconstrained_update}. Let $\m{x}(k)=[\m{x}_1^\top(k),\ldots,\m{x}_n^\top(k)]^\top$ and $\m{G}=\mathrm{diag}\{\alpha_i^{-1}\m{I}_d\}_{i=1}^n$. Then, \eqref{eq:unconstrained_update} can be written in a more compact form
\begin{equation}\label{eq:unconstrained_update_matrix_form}
\m{x}(k+1)=(\m{I}_{dn}-\m{G}^{-1}\m{L})\m{x}(k).
\end{equation}
Select $\alpha_i=(||\m{D}_i||+\beta_i)^{-1}> 0,$ with $\beta_i >0$ being an arbitrary small constant, for all $i\in \mc{V}$. Since the matrix $\m{G}^{-1}\m{L}$ is non-symmetric, in order to study the stability of the system \eqref{eq:unconstrained_update_matrix_form}, we characterize the spectral property of the matrix $\m{I}_{dn}-\m{G}^{-1}\m{L}$ in what follows.
\begin{Lemma}\label{lm:sprectral_radius_unconstrained}
The matrix $(\m{I}_{dn}-\m{G}^{-1}\m{L})$ satisfies the following properties:
\begin{enumerate}[i)]
\item Its eigenvalues are real and its spectral radius is $\rho(\m{I}_{dn}-\m{G}^{-1}\m{L})=1$ with the corresponding eigenvectors are $\m{v}\in \text{null}(\m{L})$.
\item The unity eigenvalue $1$ of $(\m{I}_{dn}-\m{G}^{-1}\m{L})$ is \textit{semisimple}\footnote{An eigenvalue is semisimple if its algebraic multiplicity and geometric multiplicity are equal.}. As a result, $(\m{I}_{dn}-\m{G}^{-1}\m{L})$ is \textit{semi-convergent} or equivalently $\lim_{k\rightarrow \infty}(\m{I}_{dn}-\m{G}^{-1}\m{L})^k=(\m{I}_{dn}-\m{G}^{-1}\m{L})^\infty$ exists.
\end{enumerate}
\end{Lemma}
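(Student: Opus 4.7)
My plan is to reduce the analysis of $\m{I}_{dn} - \m{G}^{-1}\m{L}$ to that of a symmetric matrix through a diagonal similarity. Since $\m{G}$ is a positive definite diagonal matrix, its square root $\m{G}^{1/2}$ is well defined, and
\[
\m{G}^{-1}\m{L} \;=\; \m{G}^{-1/2}\bigl(\m{G}^{-1/2}\m{L}\m{G}^{-1/2}\bigr)\m{G}^{1/2}
\]
exhibits $\m{G}^{-1}\m{L}$ as similar to the symmetric matrix $\m{L}' := \m{G}^{-1/2}\m{L}\m{G}^{-1/2}$. Because $\m{L}$ is symmetric positive semidefinite by Lemma~1, so is $\m{L}'$. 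This immediately delivers real, nonnegative eigenvalues and, since $\m{L}'$ is orthogonally diagonalizable, semisimplicity of every eigenvalue of $\m{G}^{-1}\m{L}$, which already takes care of the semisimplicity claim in~(ii) once the spectral-radius claim in~(i) is established.

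The main technical step is to show that all eigenvalues of $\m{L}'$ lie strictly below $2$; after subtracting from the identity, this forces the spectrum of $\m{I}_{dn}-\m{G}^{-1}\m{L}$ into $(-1,1]$. I would establish this via the matrix inequality $2\m{G} - \m{L} \succ 0$. The argument splits in two: first, the identity $2\m{D} - \m{L} = \m{D} + \m{A}$ together with the signless-Laplacian-style expansion
\[
\m{v}^\top(\m{D}+\m{A})\m{v} \;=\; \sum_{\{i,j\}\in\mc{E}}(\m{v}_i+\m{v}_j)^\top \m{A}_{ij}(\m{v}_i+\m{v}_j) \;\ge\; 0,
\]
which uses the symmetry assumption (A.1) $\m{A}_{ij}=\m{A}_{ji}$ essentially, yields $2\m{D} - \m{L} \succeq 0$; second, the choice $\alpha_i^{-1}=\|\m{D}_i\|+\beta_i$ together with $\m{D}_i\preceq\|\m{D}_i\|\m{I}_d$ gives $\m{G} - \m{D} \succeq \mathrm{diag}(\beta_i\m{I}_d) \succ 0$. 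Adding these two inequalities delivers the required strict positivity, and conjugating by $\m{G}^{-1/2}$ then gives $2\m{I}_{dn} - \m{L}' \succ 0$.

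With the spectrum of $\m{L}'$ confined to $[0,2)$, the spectrum of $\m{I}_{dn} - \m{G}^{-1}\m{L}$ lies in $(-1,1]$, so $\rho(\m{I}_{dn}-\m{G}^{-1}\m{L}) \le 1$. Equality holds because Lemma~1 guarantees $\text{null}(\m{L}) \ne \{\m{0}\}$ (it already contains $\text{range}(\m{1}_n\otimes\m{I}_d)$), and the unit-modulus eigenvectors of $\m{I}_{dn}-\m{G}^{-1}\m{L}$ are exactly the solutions of $\m{L}\m{v}=\m{0}$, completing part~(i). For the semi-convergence claim in part~(ii), I would invoke the standard criterion that a matrix whose only peripheral eigenvalue is a semisimple~$1$ and whose remaining spectrum lies strictly inside the unit disk has a well-defined limit of its powers.

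The one genuinely nontrivial step is the strict upper bound $\m{L}' \prec 2\m{I}_{dn}$; in particular, the positive semidefiniteness of the signless block operator $\m{D}+\m{A}$ rests essentially on $\m{A}_{ij}=\m{A}_{ji}$, which is why the argument is specific to the symmetric setting~(A.1) and does not extend verbatim to the asymmetric cases of Sections~\ref{sec:asym_positive_weight}--\ref{sec:projection_weight}. Everything else reduces either to the symmetric-similarity observation or to a standard spectral-radius and power-convergence criterion.
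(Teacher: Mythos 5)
Your proposal is correct and follows essentially the same route as the paper's own proof: establishing $2\m{G}-\m{L}\succ 0$ via the signless-Laplacian expansion of $\m{D}+\m{A}$ plus the slack $\beta_i$ from the choice of $\alpha_i$, then transferring the spectrum of $\m{G}^{-1}\m{L}$ into $[0,2)$ through the symmetric similarity $\m{G}^{-1/2}\m{L}\m{G}^{-1/2}$. Your way of getting semisimplicity (diagonalizability inherited from the symmetric conjugate) is a slightly cleaner phrasing of the paper's argument, but it is the same underlying idea.
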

\begin{proof}
See Appendix \ref{app:sprectral_radius_unconstrained}.
\end{proof}
We next provide an explicit expression for the limit $\lim_{k\rightarrow \infty}(\m{I}_{dn}-\m{G}^{-1}\m{L})^k$. To proceed, consider the Jordan normal form of 
$$(\m{I}_{dn}-\m{G}^{-1}\m{L})=\m{VJV}^{-1},$$ 
where $\m{V}=[\m{v}_1,\ldots,\m{v}_{dn}]$ and $\m{V}^{-1}=[\m{u}_1,\ldots,\m{u}_{dn}]^\top$ denote matrices that contain the right eigenvectors and the left eigenvectors of $(\m{I}_{dn}-\m{G}^{-1}\m{L})$, respectively, in which the eigenvectors corresponding to the unity eigenvalues appear first. Let $\m{J}=\mathrm{diag}(1,\ldots,1,\m{J}_{l_2},\ldots,\m{J}_{l_p})\in \mb{R}^{nd\times nd}$ with the Jordan blocks $\m{J}_{l_i}\in \mb{R}^{l_i\times l_i},i=2,\ldots,p,\sum_{i=1}^pl_i=dn,$ corresponding to eigenvalues whose magnitudes are less than $1$. Then, we have
\begin{align*}
(\m{I}_{dn}-\m{G}^{-1}\m{L})^\infty&=\m{V}\m{J}^\infty\m{V}^{-1}\\
&=\m{V}\mathrm{diag}(1,\ldots,1,\m{J}_{l_2}^\infty,\ldots,\m{J}_{l_p}^\infty)\m{V}^{-1}\\
&=\m{V}\mathrm{diag}(1,\ldots,1,0,\ldots,0)\m{V}^{-1}\\
&=\textstyle\sum_{i=1}^{l_1}\m{v}_i\m{u}_i^\top, \numberthis \label{eq:infinity_matrix}
\end{align*}
where $l_1,d\leq l_1 < dn,$ is the number of unity eigenvalues of $(\m{I}_{dn}-\m{G}^{-1}\m{L})$, and $[\m{v}_1,\ldots,\m{v}_d]=\m{1}_n\otimes \m{I}_d$. From \eqref{eq:infinity_matrix}, we have $\lim_{k\rightarrow \infty}\m{x}(k)\in \mathrm{span}(\m{v}_1,\ldots,\m{v}_{l_1})$. Thus, the following theorem is obtained whose proof is given in Appendix \ref{app:unconstrained_convergence}.

\begin{Theorem}\label{thm:unconstrained_convergence} The sequence $\{\m{x}(k)\}$ generated by  \eqref{eq:unconstrained_update_matrix_form}, for an arbitrary initial vector $\m{x}(0)\in \mb{R}^{dn}$, converges geometrically to a consensus $\m{x}^*=\m{1}_n\otimes\hat{\m{x}},$ for a constant vector $\hat{\m{x}}\in \mb{R}^d$ if and only if $\text{null}(\m{L})=\text{range}(\m{1}_n\otimes \m{I}_d)$.
\end{Theorem}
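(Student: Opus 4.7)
The plan is to leverage the Jordan-form machinery already assembled in equation \eqref{eq:infinity_matrix} together with the spectral characterization provided by Lemma \ref{lm:sprectral_radius_unconstrained}, and then argue both directions of the equivalence by reading off the dimension and structure of the unity eigenspace.

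For the sufficiency direction, I would start by observing that if $\text{null}(\m{L}) = \text{range}(\m{1}_n \otimes \m{I}_d)$, then by part i) of Lemma \ref{lm:sprectral_radius_unconstrained} the unity eigenvalue of $(\m{I}_{dn} - \m{G}^{-1}\m{L})$ has a right eigenspace of dimension exactly $d$, spanned by the columns of $\m{1}_n \otimes \m{I}_d$; combined with part ii) (semisimplicity), the algebraic multiplicity equals the geometric multiplicity $d$, so the integer $l_1$ appearing in \eqref{eq:infinity_matrix} equals $d$. Substituting into \eqref{eq:infinity_matrix} and applying both sides to $\m{x}(0)$ gives
\begin{equation*}
\lim_{k\to\infty}\m{x}(k) = \sum_{i=1}^{d}\m{v}_i(\m{u}_i^\top \m{x}(0)) = (\m{1}_n \otimes \m{I}_d)\hat{\m{x}},
\end{equation*}
where $\hat{\m{x}} := [\m{u}_1^\top \m{x}(0), \ldots, \m{u}_d^\top \m{x}(0)]^\top \in \mb{R}^d$, which is exactly the desired consensus form. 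Geometric convergence then follows by splitting $\m{x}(k)$ into its projection on $\text{span}(\m{v}_1,\ldots,\m{v}_d)$ and its complement: on the complement, the iteration acts through Jordan blocks whose eigenvalues have magnitudes bounded by some $r<1$, and the standard estimate $\|\m{J}_{l_i}^k\| = O(k^{l_i-1} r^k)$ yields an exponential rate.

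For the necessity direction I would argue by contraposition. Lemma 1 guarantees $\text{range}(\m{1}_n \otimes \m{I}_d) \subseteq \text{null}(\m{L})$ always; so if the two sets are not equal, there exists a vector $\m{v} \in \text{null}(\m{L})$ that is \emph{not} of consensus form. Taking $\m{x}(0) = \m{v}$, one sees that $(\m{I}_{dn} - \m{G}^{-1}\m{L})\m{v} = \m{v}$, hence $\m{x}(k) = \m{v}$ for all $k$, and the iterates stay fixed at a non-consensus point, contradicting convergence to $\m{1}_n \otimes \hat{\m{x}}$.

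The main technical obstacle is making the geometric-rate statement fully rigorous when the non-unity part of the spectrum contains nontrivial Jordan blocks: one must ensure that the polynomial prefactors $k^{l_i-1}$ do not spoil the exponential decay, which is handled by choosing any $r' \in (r,1)$ and absorbing the polynomial into $(r'/r)^k$. A secondary subtlety is confirming that the $\m{u}_i$'s in \eqref{eq:infinity_matrix}, taken as the left eigenvectors of a non-symmetric matrix $\m{I}_{dn} - \m{G}^{-1}\m{L}$, are well-defined even though $\m{G}^{-1}\m{L}$ is not symmetric; this is already secured by Lemma \ref{lm:sprectral_radius_unconstrained}, which guarantees diagonalizability on the unity-eigenspace.
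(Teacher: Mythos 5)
Your proposal is correct and follows essentially the same route as the paper: both rely on Lemma \ref{lm:sprectral_radius_unconstrained} and the Jordan-form limit \eqref{eq:infinity_matrix}, identifying $l_1=d$ under the null-space condition and bounding the transient by the subdominant spectral radius. Your necessity argument via the explicit fixed point $\m{x}(0)=\m{v}\in \text{null}(\m{L})\setminus\text{range}(\m{1}_n\otimes\m{I}_d)$ is a slightly cleaner variant of the paper's observation that the residual terms $\sum_{i=d+1}^{l_1}(\m{u}_i^\top\m{x}(0))\m{v}_i$ vanish only on a measure-zero set, and your care with the polynomial prefactors $k^{l_i-1}$ in the Jordan blocks actually tightens a step the paper glosses over.
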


\begin{Remark}
Note that though in \eqref{eq:unconstrained_update} the matrix weights are symmetric $\m{A}_{ij}=\m{A}_{ji}$, the agents employ different update rates $\alpha_i$. Thus, the agents are shown to achieve a consensus, but not necessarily the average consensus $\bar{\m{x}}:=\frac{1}{n}(\m{1}_n^\top\otimes \m{I}_d)\m{x}(0)$. In addition, the existence of a positive spanning tree in $\mc{G}$ is sufficient for the Laplacian matrix $\m{L}$ to satisfy the condition in Theorem \ref{thm:unconstrained_convergence} \cite{Minh2018auto}.
\end{Remark}
\subsection{Matrix-weighted average consensus}
Suppose that the agents use a common update rate $\alpha_i=\alpha=1/(\max_{i\in \mc{V}}( ||\m{D}_i||)+\beta) $, for all $i\in \mc{V}$. Such an update rate can be computed in a distributed manner using the max-consensus algorithm. 
 Then the iteration \eqref{eq:unconstrained_update_matrix_form} is rewritten as
\begin{equation}\label{eq:unconstrained_average_consensus}
\m{x}(k+1)=(\m{I}_{dn}-\alpha\m{L})\m{x}(k).
\end{equation}
It can be shown similarly as in Lemma \ref{lm:sprectral_radius_unconstrained} that $(\m{I}_{dn}-\alpha\m{L})$ has the spectral radius of one and is semi-convergent. In addition, the columns of $(\m{1}_n^\top\otimes \m{I}_d)$ are the left eigenvectors corresponding to the unity eigenvalues of $(\m{I}_{dn}-\alpha\m{L})$, i.e., $(\m{1}_n^\top\otimes \m{I}_d)(\m{I}_{dn}-\alpha\m{L})=(\m{1}_n^\top\otimes \m{I}_d)$. Then, we obtain the following theorem which can be proved by following similar lines as in Proof of Theorem \ref{thm:unconstrained_convergence}.
\begin{Theorem}\label{thm:unconstrained_average_convergence} The sequence $\{\m{x}(k)\}$ generated by  \eqref{eq:unconstrained_average_consensus}, for an arbitrary initial vector $\m{x}(0)\in \mb{R}^{dn}$, converges geometrically to the average consensus $\m{x}^*=\m{1}_n\otimes \bar{\m{x}}$ if and only if $\text{null}(\m{L})=\text{range}(\m{1}_n\otimes \m{I}_d)$.
\end{Theorem}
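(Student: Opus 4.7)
The plan is to mirror the proof of Theorem~\ref{thm:unconstrained_convergence}, while exploiting the extra structure that with a common update rate $\alpha$ the iteration matrix $\m{I}_{dn}-\alpha\m{L}$ is symmetric (since $\m{L}$ is symmetric under~(A.1)). First I would reproduce the spectral claims of Lemma~\ref{lm:sprectral_radius_unconstrained} in this simpler setting: the choice $\alpha=1/(\max_{i}\|\m{D}_i\|+\beta)$ keeps all eigenvalues of $\m{I}_{dn}-\alpha\m{L}$ in $(-1,1]$, symmetry makes them real, the unity eigenvalue is automatically semisimple, and consequently $\m{I}_{dn}-\alpha\m{L}$ is semi-convergent. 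These facts are already indicated in the paragraph preceding the theorem, so this step is essentially bookkeeping.

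For the necessity direction, I would argue directly: if $\{\m{x}(k)\}$ converges to a consensus for every $\m{x}(0)$, then picking $\m{x}(0)=\m{v}\in\text{null}(\m{L})$ yields $(\m{I}_{dn}-\alpha\m{L})\m{v}=\m{v}$, so the sequence is stationary at $\m{v}$; convergence to a consensus therefore forces $\m{v}\in\text{range}(\m{1}_n\otimes\m{I}_d)$. The reverse inclusion $\text{range}(\m{1}_n\otimes\m{I}_d)\subseteq\text{null}(\m{L})$ holds because each block row of $\m{L}$ sums to the zero block.

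For sufficiency, under the hypothesis $\text{null}(\m{L})=\text{range}(\m{1}_n\otimes\m{I}_d)$ the unity eigenvalue has multiplicity exactly $d$, and I can select an orthonormal basis of the corresponding eigenspace as $\{\tfrac{1}{\sqrt{n}}(\m{1}_n\otimes\m{e}_k)\}_{k=1}^{d}$, where $\m{e}_k$ is the $k$th standard basis vector of $\mb{R}^d$. Mirroring the computation leading to~\eqref{eq:infinity_matrix}, the spectral decomposition yields
\begin{equation*}
(\m{I}_{dn}-\alpha\m{L})^\infty=\sum_{k=1}^{d}\tfrac{1}{n}(\m{1}_n\otimes\m{e}_k)(\m{1}_n^\top\otimes\m{e}_k^\top)=\tfrac{1}{n}\m{1}_n\m{1}_n^\top\otimes\m{I}_d.
\end{equation*}
Applied to $\m{x}(0)$ this returns exactly $\m{1}_n\otimes\bar{\m{x}}$ with $\bar{\m{x}}=\tfrac{1}{n}(\m{1}_n^\top\otimes\m{I}_d)\m{x}(0)$. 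Geometric convergence follows because the remaining eigenvalues lie strictly inside the unit disk, so the error decays at a rate controlled by the second-largest eigenvalue modulus.

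The one conceptually delicate step (rather than a real obstacle) is the symmetric treatment of left and right unity-eigenvectors. In Theorem~\ref{thm:unconstrained_convergence} the left and right unity-eigenvectors differed, giving only a generic consensus; here symmetry of $\m{I}_{dn}-\alpha\m{L}$ forces them to coincide, which is precisely the content of the identity $(\m{1}_n^\top\otimes\m{I}_d)(\m{I}_{dn}-\alpha\m{L})=(\m{1}_n^\top\otimes\m{I}_d)$ noted in the paragraph before the theorem. Once that identification is made, the Kronecker computation above promotes the limit into the averaging projector and the rest of the proof is immediate from the machinery already developed for Theorem~\ref{thm:unconstrained_convergence}.
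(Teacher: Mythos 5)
Your proposal is correct and follows essentially the same route as the paper, which itself only sketches this result by pointing back to the proof of Theorem~\ref{thm:unconstrained_convergence}: you characterize the spectrum of $\m{I}_{dn}-\alpha\m{L}$, identify the limit projector onto the unity eigenspace, and read off $\m{1}_n\otimes\bar{\m{x}}$ from the identity $(\m{1}_n^\top\otimes \m{I}_d)(\m{I}_{dn}-\alpha\m{L})=(\m{1}_n^\top\otimes \m{I}_d)$, with geometric convergence from the spectral gap. The only (welcome) refinements are that symmetry lets you replace the Jordan-form/left-right-eigenvector bookkeeping of \eqref{eq:infinity_matrix} with an orthonormal spectral decomposition, and your necessity argument via stationary initial conditions in $\text{null}(\m{L})$ is cleaner than the paper's zero-measure-set reasoning.
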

\subsection{Matrix-weighted average consensus under switching network topology}
The assumption on fixed interaction graphs can be relaxed by instead considering undirected switching graphs \cite{Jadbabaie2003tac,Saber2004tac,YSu2012auto}, which imposes a mild assumption on the interaction graphs and can reduce communicated data significantly in each iteration (see Remark \ref{rmk:compressed_data} below). To proceed, let $\sigma:\mb{Z}^+\rightarrow\mc{P}:=\{1,2,\ldots,\rho\}$ be a piecewise constant switching signal. That is, there exists a subsequence $k_l,l\in \mb{Z}^+$, of $\{k\},k\in \mb{Z}^+$, such that $\sigma(k)$ is a constant for $k_l\leq k< k_{l+1},\forall k_l$.

Given a switching signal $\sigma(k)$, we define an undirected switching graph $\mc{G}_{\sigma(k)}=\{\mc{V},\mc{E}_{\sigma(k)},\mc{A}_{\sigma(k)}\}$, where $\mc{V}=\{1,\ldots,n\}$ and $\mc{E}_{\sigma(k)}:=\{(i,j)\in \mc{V}\times \mc{V}:\m{A}_{ij}(k)=\m{A}_{ji}(k)\geq 0\}$. Note importantly that the condition $\m{A}_{ij}(k)=\m{A}_{ji}(k),\forall k\in \mb{Z}^+$, indicates that $\mc{G}_{\sigma(k)}$ remains undirected for every time instant $k$, but not necessarily connected. Let $\m{L}_{\sigma(k)}\in \mb{R}^{dn\times dn}$ be the corresponding matrix-weighted Laplacian of the graph $\mc{G}_{\sigma(k)}$. The union of such graphs $(\mc{G}_{\sigma(\gamma)},\mc{G}_{\sigma(\gamma+1)},\ldots,\mc{G}_{\sigma(\eta)})$ over a time interval $[\gamma,\eta]\subseteq [0,\infty)$, denoted as $\mc{G}_{\sigma(\gamma:\eta)}:=\cup_{k=\gamma}^{\eta}\mc{G}_{\sigma(k)}$, is defined by the triplet $\{\mc{V},\mc{E}_{\sigma(\gamma:\eta)},\mc{A}_{\sigma(\gamma:\eta)}\}$ . Here, the edge set $\mc{E}_{\sigma(\gamma:\eta)}:=\cup_{k=\gamma}^{\eta}\mc{E}_{\sigma(k)}$ and 
\begin{align*}
\mc{A}_{\sigma(\gamma:\eta)}:=\{\m{A}_{ij}(\gamma:\eta)&=\sum_{k=\gamma}^{\eta}\m{A}_{ij}(k):\\
&(i,j)\in \mc{E}_{\sigma(\gamma:\eta)}\}.
\end{align*} It is noted that $\m{A}_{ij}(\gamma:\eta)$ can be positive definite even if none of the weights $\{\m{A}_{ij}(k)\}_{k\in [\gamma,\eta]}$ is positive definite.
The graph $\mc{G}_{\sigma(k)}$ is assumed to satisfy the following joint connectedness assumption for matrix-weighted graphs \cite{Jadbabaie2003tac}.
\begin{Assumption}[Joint Connectedness]\label{ass:jointly_connected}
There exists a subsequence $\{k_t:t\in \mb{Z}^+\}$ such that $\lim_{t\rightarrow \infty}k_t=\infty$ and $k_{t+1}-k_t$ is uniformly bounded for all $t\geq 0$, and the graph $\cup_{k=k_t}^{k_{t+1}-1}\mc{G}_{\sigma(k)}$  contains a positive spanning tree.
\end{Assumption}
Joint connectedness of switching matrix-weighted graphs implies that the union of the switching graphs over each successive finite time span $[k_t,k_{t+1}-1]$ contains a positive spanning tree. Direct consequences of the joint connectedness of $\cup_{k=k_t}^{k_{t+1}-1}\mc{G}_{\sigma(k)}$ are as follows. The matrix-weighted Laplacian $\sum_{k=k_t}^{k_{t+1}-1}\m{L}_{\sigma(k)}$ of the graph $\cup_{k=k_t}^{k_{t+1}-1}\mc{G}_{\sigma(k)}$ is positive semi-definite, has $d$ zero eigenvalues and its null space is $\text{range}(\m{1}_n\otimes \m{I}_d)$.

\subsubsection*{Consensus Law} The consensus law for each agent $i\in \mc{V}$ under the switching graph $\mc{G}_{\sigma(k)}$ is given as
\begin{equation}\label{eq:consensus_switching_net}
\m{x}_i(k+1)=\m{x}_i(k)-\alpha\sum_{j=1}^n\m{A}_{ij}(k)(\m{x}_i(k)-\m{x}_j(k)),
\end{equation}
where $\alpha$ is a constant update rate to be defined, which is common to the agents.
The preceding consensus protocol can be written in a compact form
\begin{equation}\label{eq:consensus_switching_net_matrix}
\m{x}(k+1) = \m{x}(k)-\alpha\m{L}_{\sigma(k)}\m{x}(k).
\end{equation}
Let $\mu := \max_{\sigma(k)}||\m{L}_{\sigma(k)}||$. Then, we obtain the following theorem whose proof is given in Appendix \ref{app:switching_graph}.
\begin{Theorem}\label{thm:switching_graph}
Suppose that Assumption \ref{ass:jointly_connected} holds and the update rate $\alpha$ satisfies $0< \alpha < 1/\mu$. Then, the sequence $\{\m{x}(k)\}$ generated by  \eqref{eq:consensus_switching_net_matrix}, for an arbitrary initial vector $\m{x}(0)\in \mb{R}^{dn}$, asymptotically converges to the average consensus $\m{x}^*=\m{1}_n\otimes \bar{\m{x}}$ as $k\rightarrow \infty$.
\end{Theorem}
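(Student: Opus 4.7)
The plan is to reduce Theorem~\ref{thm:switching_graph} to showing that a suitable disagreement vector decays to zero, and to exploit joint connectedness to obtain a strict contraction over each window $[k_t,k_{t+1})$. First, observe that every $\m{L}_{\sigma(k)}$ is symmetric positive semidefinite with $(\m{1}_n^\top \otimes \m{I}_d)\m{L}_{\sigma(k)} = \m{0}$, so the running average $\bar{\m{x}}=\tfrac{1}{n}(\m{1}_n^\top \otimes \m{I}_d)\m{x}(k)$ is invariant under the iteration~\eqref{eq:consensus_switching_net_matrix}. Hence any limit of $\m{x}(k)$ must share the initial average, and it suffices to prove that $\m{x}(k)$ approaches a consensus. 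Let $\m{P}:=\m{I}_{dn}-\tfrac{1}{n}\m{1}_n\m{1}_n^\top\otimes \m{I}_d$ be the orthogonal projection onto the disagreement subspace $\mc{D}:=\text{null}(\m{1}_n^\top\otimes \m{I}_d)$, and set $\delta(k):=\m{P}\m{x}(k)$. Because $\m{P}$ commutes with every $\m{L}_{\sigma(k)}$, the disagreement evolves by $\delta(k+1)=(\m{I}_{dn}-\alpha\m{L}_{\sigma(k)})\delta(k)$, so the task reduces to showing $\delta(k)\to \m{0}$.

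Next I would use the quadratic Lyapunov candidate $V(k):=\|\delta(k)\|^2$. Expanding directly gives $V(k+1)-V(k)=-\alpha\,\delta(k)^\top \m{L}_{\sigma(k)}(2\m{I}_{dn}-\alpha\m{L}_{\sigma(k)})\delta(k)$. The assumption $0<\alpha<1/\mu$ yields $\alpha\m{L}_{\sigma(k)}\preceq \m{I}_{dn}$, so $2\m{I}_{dn}-\alpha\m{L}_{\sigma(k)}\succeq \m{I}_{dn}$, producing the per-step bound $V(k+1)-V(k)\leq -\alpha\,\delta(k)^\top \m{L}_{\sigma(k)}\delta(k)\leq 0$. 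In particular, $V$ is non-increasing and each factor $\m{I}_{dn}-\alpha\m{L}_{\sigma(k)}$ is non-expansive on $\mc{D}$.

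The main obstacle is upgrading this step-wise inequality to a strict contraction across a full connectedness window. Define $\m{\Phi}_t:=(\m{I}_{dn}-\alpha\m{L}_{\sigma(k_{t+1}-1)})\cdots (\m{I}_{dn}-\alpha\m{L}_{\sigma(k_t)})$, so $\delta(k_{t+1})=\m{\Phi}_t\delta(k_t)$. I would argue by contradiction that $\|\m{\Phi}_t\m{z}\|<\|\m{z}\|$ for every nonzero $\m{z}\in \mc{D}$: if equality held for some $\m{z}_\star$, telescoping the per-step bound would force $\delta(k)^\top \m{L}_{\sigma(k)}\delta(k)=0$, hence $\m{L}_{\sigma(k)}\delta(k)=\m{0}$, for every $k\in [k_t,k_{t+1})$ starting from $\delta(k_t)=\m{z}_\star$. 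This makes $\delta(k)=\m{z}_\star$ constant throughout the window, so $\m{z}_\star\in \text{null}\bigl(\sum_{k=k_t}^{k_{t+1}-1}\m{L}_{\sigma(k)}\bigr)$; by Assumption~\ref{ass:jointly_connected} and the remark following it, this null space equals $\text{range}(\m{1}_n\otimes \m{I}_d)$, which intersected with $\mc{D}$ gives $\m{z}_\star=\m{0}$, a contradiction. Uniformity of the contraction across windows then follows from a finiteness argument: $\mc{P}$ is finite and window lengths are bounded uniformly by Assumption~\ref{ass:jointly_connected}, so only finitely many products $\m{\Phi}_t$ restricted to $\mc{D}$ can occur, and the maximum of their operator norms is some $\eta\in [0,1)$.

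Combining the pieces yields $V(k_{t+1})\leq \eta^2 V(k_t)$ for every $t$, so $V(k_t)\to 0$ geometrically along the window endpoints; monotonicity of $V$ then forces $V(k)\to 0$, which is exactly $\m{x}(k)\to \m{1}_n\otimes \bar{\m{x}}$ as $k\to\infty$.
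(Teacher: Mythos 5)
Your proof is correct, but it follows a genuinely different route from the paper's. The paper works with the same disagreement vector $\tilde{\m{x}}(k)=\m{P}\m{x}(k)$ and the same Lyapunov function, but after establishing $V(k+1)-V(k)\leq -(\mu^{-1}-\alpha)\|\m{L}_{\sigma(k)}\tilde{\m{x}}(k)\|^2$ it proceeds asymptotically: summability of the decrements gives $\m{L}_{\sigma(k)}\tilde{\m{x}}(k)\to\m{0}$, this is propagated forward to $\m{L}_{\sigma(k+s)}\tilde{\m{x}}(k)\to\m{0}$ for every $s$ by unrolling the recursion, and summing over a connectedness window yields $\bigl(\sum_{k=k_t}^{k_{t+1}-1}\m{L}_{\sigma(k)}\bigr)\tilde{\m{x}}(k_t)\to\m{0}$, whence $\tilde{\m{x}}(k_t)\to\m{0}$ since $\tilde{\m{x}}(k_t)$ is orthogonal to the null space of the window-sum Laplacian. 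You instead prove that each window transition matrix $\m{\Phi}_t$ is a strict contraction on the disagreement subspace (via the equality-case/contradiction argument, which is sound: zero total decrement forces $\m{L}_{\sigma(k)}\delta(k)=\m{0}$ stepwise, hence $\delta$ frozen at $\m{z}_\star\in\mathrm{null}(\sum_k\m{L}_{\sigma(k)})\cap\mc{D}=\{\m{0}\}$), and then extract a uniform contraction factor $\eta<1$ by finiteness. Your approach buys a stronger conclusion -- geometric convergence rather than mere asymptotic convergence -- and avoids the paper's somewhat delicate forward-propagation step. The one point you should make explicit is that your uniformity argument hinges on $\m{L}_{\sigma(k)}$ ranging over the \emph{finite} set $\{\m{L}_p\}_{p\in\mc{P}}$ together with the uniform bound on $k_{t+1}-k_t$; this is consistent with the paper's setup (note $\mu=\max_{\sigma(k)}\|\m{L}_{\sigma(k)}\|$ is a max over $\mc{P}$), and the paper's own last step implicitly needs the analogous uniform lower bound on the smallest nonzero eigenvalue of the window-sum Laplacians, so this is a shared, not an extra, hypothesis.
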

Theorem \ref{thm:switching_graph} indicates that joint connectedness condition on the switching graphs of the system is sufficient for the agents to achieve the average consensus, provided that the update rate is sufficiently small.
\begin{Remark}\label{rmk:compressed_data}
In the consensus of multi-agent systems whose state vectors are embedded in a high dimensional space $\mb{R}^d$, it is communication expensive for each agent $i$ to send the whole vector $\m{x}_i(k)$ to its neighbors at every iteration. We interpret here that how the matrix-weighted consensus law \eqref{eq:consensus_switching_net} can reduce the amount of exchanged data. Although the matrix weight $\m{A}_{ij}(k_t:k_{t+1}-1)=\sum_{k=k_t}^{k_{t+1}-1}\m{A}_{ij}(k)$ associated with an edge $(i,j)\in \mc{E}_{\sigma(k_t:k_{t+1}-1)}$ over a time interval $[k_t,k_{t+1}-1]$ might need to be positive definite (Assumption \ref{ass:jointly_connected}), $\m{A}_{ij}(k)$ can be positive semidefinite and even relatively low-rank, $\forall k\in \mb{Z}^+$. As a result, at each iteration $k\in \mb{Z}^+$, only a small portion of the coordinates of the vector $\m{x}_i(k)$ can be sent to agent $j$, and vice versa. For example, when $\m{A}_{ij}(k)=\mathrm{diag}(\m{B},\m{0})\in \mb{R}^{d\times d},$ for a matrix $\m{B}\in \mb{R}^{r\times r},\m{B}>0,r<d$, only the first $r$ components of $\m{A}_{ij}(k)\m{x}_i(k)$ need to be transmitted to agent $j$ since the other components are zeros. All the coordinates of the vector $\m{x}_i$ are evolved through interagent communications within each successive finite time span $k\in [k_t,k_{t+1}-1],\forall t\in \mb{Z}^+$.
Thus, the low-rank matrix weight $\m{A}_{ij}(k)$ acts as a compression operator that compresses a high-dimensional vector $\m{x}_i(k)$ before sending it at every iteration $k$.
\end{Remark}

\section{Consensus under Asymmetric Matrix-Weights}\label{sec:asym_positive_weight}
In this section, we suppose that each agent $i$ employs the same matrix weight $\m{A}_i$ for every relative vector $(\m{x}_i(k)-\m{x}_j(k)),\forall j\in \mc{N}_i$ (see condition (A.2) in Section \ref{sec:preliminary}). The matrix weight $\m{A}_i$ is assumed to satisfy Assumption \ref{ass:asym_matrix_weight} below, for all $i\in \mc{V}$. Under the connectedness condition on the graph $\mc{G}$ and sufficiently small update rates, we show that the system admits a consensus.
\subsection{Consensus Law}
Each agent $i$ updates its vector via
\begin{equation}\label{eq:asym_consensus_law}
\m{x}_i(k+1)=\m{x}_i(k)-\alpha_i\sum_{j\in \mc{N}_i}\m{A}_{i}(\m{x}_i(k)-\m{x}_j(k)),~\forall i\in \mc{V},
\end{equation}
where $\alpha_i>0$ is a step size (or update rate) associated with agent $i$, which is chosen sufficiently small to guarantee convergence of \eqref{eq:asym_consensus_law}. The matrix weight $\m{A}_{i}\in \mb{R}^{d\times d}$ associated with agent $i,\forall i \in \mc{V}$, is an invertible matrix, which is assumed to satisfy the following condition.
\begin{Assumption}\label{ass:asym_matrix_weight} There exists a positive constant $\gamma_i>0$ such that for any nonzero vector $\m{y}\in \mb{R}^d$, the following inequality holds
\begin{equation}\label{eq:asym_feasible_direction}
\m{y}^\top\m{A}_i^{-1}\m{y}\geq \gamma_i||\m{y}||^2.
\end{equation}
\end{Assumption}
Two possible classes of matrix weights that satisfy Assumption \ref{ass:asym_matrix_weight} are given as follows:
\begin{enumerate}[(i)]
\item Positive definite matrix weight $\m{A}_i>0$. Then \eqref{eq:asym_feasible_direction} is satisfied with $\gamma_i=\lambda^{-1}_{\max}(\m{A}_i)$.
\item Rotation matrices $\m{A}_i=\m{R}_i\in SO(d)$ that are (non-symmetric) positive definite, where $SO(d)$ denotes the \textit{special orthogonal group}. Indeed, using the relation $\m{R}_i^{-1}=\m{R}_i^\top$, for every nonzero vector $\m{y}\in  \mb{R}^d$, we have $$\m{y}^\top\m{R}_i^{-1}\m{y}=\m{y}^\top\m{R}_i^\top\m{y}=\m{y}^\top\m{R}_i\m{y}>0.$$ In addition, it follows from $\m{y}^\top(\m{R}_i\m{y})=\cos(\theta_i)||\m{y}||^2>0\Leftrightarrow \cos(\theta_i)\geq \gamma_i >0$, where $\theta_i$ is the angle between $\m{R}_i\m{y}$ and $\m{y}$, for a constant $\gamma_i\in (0,1)$. Consequently, $\m{y}^\top\m{R}_i^{-1}\m{y}\geq \gamma_i||\m{y}||^2$, which shows \eqref{eq:asym_feasible_direction}. 
\end{enumerate}
\begin{figure}[t]
\centering
\begin{tikzpicture}[scale=1]
\node[place] (j) at (4,0.) [label=below:$j$] {};
\node[place] (i) at (0,0.5) [label=below:$\m{x}_i(k)$] {};
\node[place] (k) at (4,1.) [label=above:$k$] {};

\draw[red,fill=yellow] (1,0.5) circle (0.5);
\draw[-latex,thick, blue](i)--(1.4,0.5) node[right] {$\m{u}_i(k)$};
\draw[-latex,thick, blue](i)--(1.,1.3) node[left,yshift=-1mm,xshift=-3mm] {$\m{A}_i\m{u}_i(k)$};
\node[place] () at (1.1,1.3) [label=right:$\m{x}_i(k+1)$] {};
\draw[-latex,thick, blue](j)--(3.4,0.2) ;
\draw[-latex,thick, blue](k)--(3.4,0.7) ;
\end{tikzpicture}
\caption{Interpretation of the matrix-weighted consensus scheme \eqref{eq:asym_consensus_law}. The desired displacement of consensus update $\m{u}_i(k)$ and the scaled/rotated update $\m{A}_i\m{u}_i(k)$ of agent $i$.}
\label{fig:collision_avoidance}
\end{figure}
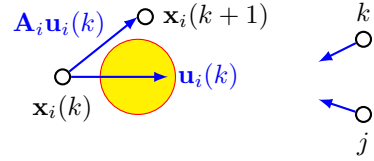
\begin{Remark}
The intuition of the consensus law \eqref{eq:asym_consensus_law} is as follows. Let $\m{u}_i(k):=-\alpha_i\sum_{j\in \mc{N}_i}(\m{x}_i(k)-\m{x}_j(k))$ be the gradient descent update direction of each agent $i$ that minimizes the objective function $V(\m{x})=(1/2) \m{x}^\top\m{L}^\mathrm{o}\m{x}=1/2\sum_{(i,j)\in \mc{E}}(\m{x}_i-\m{x}_j)^2$. Then, $\m{A}_i\m{u}_i(k)$ is the matrix-weighted consensus update of agent $i$ in \eqref{eq:asym_consensus_law} due to the scaled matrix/rotation $\m{A}_i$, as illustrated in Fig. \ref{fig:collision_avoidance}. 
Furthermore, the condition 
\begin{align*}
(\m{A}_i\m{u}_i)^\top\m{u}_i&=(\m{x}_i(k+1)-\m{x}_i(k))^\top\m{A}_i^{-1}\\
&\quad \times(\m{x}_i(k+1)-\m{x}_i(k))\\
&\overset{\eqref{eq:asym_feasible_direction}}{\geq} \gamma_i ||\m{x}_i(k+1)-\m{x}_i(k)||^2 \numberthis
\end{align*} indicates that $(\m{A}_i\m{u}_i)$ is indeed a descent direction, as it will be shown bellow that the function $V(\m{x})$ is non-increasing with respect to \eqref{eq:asym_consensus_law}.

The second case (ii) above also corresponds to the consensus of multiple agents in $\mb{R}^d$ in which the agent orientation matrices are measured with bias errors, if each agent is thought to maintain a body-fixed coordinate frame, whose origin is at its centroid, with regard to which the agent measure relative vectors.
Futhermore, in the case (ii), the consensus law \eqref{eq:asym_consensus_law} is a discrete-time counterpart of the continuous-time consensus law in \cite{Ahn2019auto}. As a development of \cite{Ahn2019auto}, the matrix-weighted consensus law \eqref{eq:asym_consensus_law} uses more general matrix weights and is applicable for an arbitrary $d$-dimensional space. 
\end{Remark}

\subsection{Convergence Analysis}
Let $\m{G}=\mathrm{diag}(\alpha_1\m{A}_1,\ldots,\alpha_n\m{A}_n)$, $\m{x}(k)=[\m{x}^{\top}_1(k),\ldots,\m{x}^{\top}_n(k)]^\top$. Then, Eq. \eqref{eq:asym_consensus_law} can be written as
\begin{equation}\label{eq:asym_consensus_matrix_form}
\m{x}(k+1)=\m{x}(k)-\m{G}\m{L}^\mathrm{o}\m{x}(k).
\end{equation}
Consider the Lyapunov function $V(\m{x}(k))=(1/2) \m{x}^\top(k)\m{L}^\mathrm{o}\m{x}(k)$, which is positive definite w.r.t. the consensus space $\mathrm{span}(\m{1}_n\otimes \m{I}_d)$. It is noted that the function $V(\m{x})$ is \textit{Lipschitz differentiable} with Lipschitz constant $L_{V}:=||\m{L}^\mathrm{o}||$, i.e., $\forall \m{x},\m{y}\in \mb{R}^d$, 
$$||\nabla V(\m{x})-\nabla V(\m{y})||=||\m{L}^\mathrm{o}(\m{x}-\m{y})||\leq ||\m{L}^\mathrm{o}||||\m{x}-\m{y}||.$$
An estimate of the upper-bound of the Laplacian spectral radius can be found in\cite{Liu2004LinearAlgeb}.
Let $\gamma_{\min}:=\min_{i=1,\ldots n} {\gamma_i}$ and $\alpha_{\max}:=\max_{i=1,\ldots n} {\alpha_i}$. Then, we have that the Lyapunov function $V(\m{x}(k))$ is non-increasing according to the following lemma.
\begin{Lemma}\label{lm:asym_non_increasing_function}
Suppose that the graph $\mc{G}$ is connected and Assumption \ref{ass:asym_matrix_weight} holds. Let the update rate $0<\alpha_{\max}<2\gamma_{\min}/L_{V}$. Then, the Lyapunov function $V(\m{x}(k))$ is non-increasing w.r.t. \eqref{eq:asym_consensus_matrix_form}, i.e.,
\begin{align*}
V(\m{x}(k+1))&-V(\m{x}(k))\leq -\gamma_{\min}\alpha_{\max}^{-1} ||\m{x}(k+1)-\m{x}(k)||^2. \numberthis \label{eq:asym_non_increasing_function}
\end{align*}
\end{Lemma}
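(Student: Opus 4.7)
My approach is to use the exact second-order Taylor expansion of the quadratic Lyapunov function $V(\m{x})=\tfrac{1}{2}\m{x}^\top\m{L}^{\mathrm{o}}\m{x}$, then bound the linear and quadratic pieces separately using Assumption~\ref{ass:asym_matrix_weight} and the Lipschitz constant $L_V$, respectively. First I would express the one-step increment as $\Delta\m{x}:=\m{x}(k+1)-\m{x}(k)=-\m{G}\m{L}^{\mathrm{o}}\m{x}(k)=-\m{G}\,\nabla V(\m{x}(k))$; because each $\m{A}_i$ is invertible, $\m{G}$ is invertible with $\m{G}^{-1}=\mathrm{diag}(\alpha_i^{-1}\m{A}_i^{-1})$, giving the key rewriting $\nabla V(\m{x}(k))=-\m{G}^{-1}\Delta\m{x}$.

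Next, because $V$ has Hessian $\m{L}^{\mathrm{o}}$, the identity $V(\m{x}(k+1))-V(\m{x}(k))=\nabla V(\m{x}(k))^\top\Delta\m{x}+\tfrac{1}{2}\Delta\m{x}^\top\m{L}^{\mathrm{o}}\Delta\m{x}$ holds exactly. For the linear term I substitute $\nabla V(\m{x}(k))=-\m{G}^{-1}\Delta\m{x}$ and use the scalar identity $\m{z}^\top\m{M}\m{z}=\m{z}^\top\m{M}^\top\m{z}$ to rewrite it as $-\sum_i\alpha_i^{-1}\,\Delta\m{x}_i^\top\m{A}_i^{-1}\Delta\m{x}_i$, then apply Assumption~\ref{ass:asym_matrix_weight} summand-wise with $\m{y}=\Delta\m{x}_i$ and replace $\gamma_i,\alpha_i$ by the worst-case constants to arrive at $\nabla V(\m{x}(k))^\top\Delta\m{x}\leq -\gamma_{\min}\alpha_{\max}^{-1}\|\Delta\m{x}\|^2$. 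For the quadratic remainder I bound $\tfrac{1}{2}\Delta\m{x}^\top\m{L}^{\mathrm{o}}\Delta\m{x}\leq \tfrac{L_V}{2}\|\Delta\m{x}\|^2$ from $\|\m{L}^{\mathrm{o}}\|=L_V$. Summing the two bounds yields an overall coefficient $-(\gamma_{\min}\alpha_{\max}^{-1}-L_V/2)\|\Delta\m{x}\|^2$, which the step-size window $\alpha_{\max}<2\gamma_{\min}/L_V$ renders strictly negative, delivering the claimed non-increasing estimate \eqref{eq:asym_non_increasing_function}.

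The main obstacle I anticipate is the asymmetry of $\m{G}$: no standard positive-definite argument applies directly to the non-symmetric cross term $\Delta\m{x}^\top\m{G}^{-1}\Delta\m{x}$. Assumption~\ref{ass:asym_matrix_weight} is precisely engineered for this purpose: since any scalar quadratic form equals the one associated with the symmetrized matrix $\tfrac{1}{2}(\m{G}^{-1}+\m{G}^{-\top})$, the assumed coercivity of each block $\m{A}_i^{-1}$ transfers to a lower bound on the full non-symmetric form without ever requiring $\m{G}^{-1}$ itself to be symmetric or positive definite. The only remaining care is verifying that the stated step-size window correctly balances the opposing $\|\Delta\m{x}\|^2$ coefficients so that the decrease inequality has the desired sign.
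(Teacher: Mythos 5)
Your proof is correct and follows essentially the same route as the paper's: both rewrite the cross term $\Delta\m{x}^\top\m{L}^{\mathrm{o}}\m{x}(k)=-\Delta\m{x}^\top\m{G}^{-1}\Delta\m{x}$, bound it blockwise via Assumption~\ref{ass:asym_matrix_weight} to get $-\gamma_{\min}\alpha_{\max}^{-1}\|\Delta\m{x}\|^2$, and absorb the quadratic remainder with the Lipschitz (descent-lemma) bound $\tfrac{L_V}{2}\|\Delta\m{x}\|^2$, your exact Taylor expansion being the same inequality since $V$ is quadratic. Note that, exactly as in the paper's own proof, the coefficient you actually obtain is $-(\gamma_{\min}\alpha_{\max}^{-1}-L_V/2)$ rather than the $-\gamma_{\min}\alpha_{\max}^{-1}$ displayed in \eqref{eq:asym_non_increasing_function}; this suffices for non-increase under the stated step-size window but is a (shared) mismatch with the constant in the lemma statement.
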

\begin{proof} See Appendix \ref{app:asym_non_increasing_function}.
\end{proof}
From Lemma \ref{lm:asym_non_increasing_function}, convergence to a consensus of the system is shown in the following result.
\begin{Theorem}\label{thm:asym_pos_weight_consensus}
Suppose that the graph $\mc{G}$ is connected and Assumption \ref{ass:asym_matrix_weight} holds. If $0<\alpha_{\max}<2\gamma_{\min}/L_{V}$, the sequence $\{\m{x}(k)\}$ generated by \eqref{eq:asym_consensus_matrix_form}, for an arbitrary vector $\m{x}(0)\in \mb{R}^{dn}$, is bounded and converges geometrically to a consensus point $\lim_{k\rightarrow \infty}\m{x}(k)=\m{1}_n\otimes \m{x}^*$.
\end{Theorem}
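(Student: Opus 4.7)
The plan is to build on Lemma \ref{lm:asym_non_increasing_function} in two stages. First, I would convert the Lyapunov decrease into a \emph{strict} contraction on $V$ by exploiting the spectral gap of $\m{L}^\mathrm{o}$. Second, this contraction will deliver geometric decay of the disagreement component of $\m{x}(k)$, from which the Cauchy property of the consensus component, and hence convergence of the whole sequence, will follow.

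The starting point is that, because $\mc{G}$ is connected and $\m{L}^\mathrm{o}=\m{L}_s\otimes \m{I}_d$ (with $\m{L}_s$ the scalar Laplacian), the kernel of $\m{L}^\mathrm{o}$ is exactly $\text{range}(\m{1}_n\otimes \m{I}_d)$ and its smallest positive eigenvalue equals the algebraic connectivity $\lambda_2(\m{L}_s)>0$. Writing orthogonally $\m{x}(k)=P\m{x}(k)+\m{z}(k)$, with $P$ the projector onto that kernel, I would expand $\m{z}(k)$ in the eigenbasis of $\m{L}^\mathrm{o}$ to obtain the Cauchy--Schwarz-type bound $\|\m{L}^\mathrm{o}\m{x}(k)\|^2=\|\m{L}^\mathrm{o}\m{z}(k)\|^2\geq 2\lambda_2(\m{L}_s)V(\m{x}(k))$. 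Using $\m{x}(k+1)-\m{x}(k)=-\m{G}\m{L}^\mathrm{o}\m{x}(k)$ and the fact that $\m{G}=\mathrm{diag}(\alpha_i\m{A}_i)$ is invertible by Assumption \ref{ass:asym_matrix_weight} (hence $\sigma_{\min}(\m{G})>0$), this gives $\|\m{x}(k+1)-\m{x}(k)\|^2\geq 2\sigma_{\min}(\m{G})^2\lambda_2(\m{L}_s)V(\m{x}(k))$. Plugging into Lemma \ref{lm:asym_non_increasing_function} yields $V(\m{x}(k+1))\leq\beta V(\m{x}(k))$ with
\begin{equation*}
\beta \;=\; 1-\tfrac{2\gamma_{\min}}{\alpha_{\max}}\,\sigma_{\min}(\m{G})^2\,\lambda_2(\m{L}_s),
\end{equation*}
which is strictly less than $1$; if the small-step-size regime makes $\beta<0$, the nonnegativity of $V$ forces $V(\m{x}(k+1))=0$ and the subsequent analysis is trivial, so we may assume $\beta\in[0,1)$.

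From the geometric decay of $V(\m{x}(k))$ I would deduce the geometric decay of $\|\m{z}(k)\|$ via $\|\m{z}(k)\|^2\leq 2V(\m{x}(k))/\lambda_2(\m{L}_s)$, and hence of the increments $\|\m{x}(k+1)-\m{x}(k)\|=\|\m{G}\m{L}^\mathrm{o}\m{z}(k)\|\leq\|\m{G}\|\,\|\m{L}^\mathrm{o}\|\,\|\m{z}(k)\|$. Since $\|P\m{x}(k+1)-P\m{x}(k)\|\leq\|\m{x}(k+1)-\m{x}(k)\|$, the increments of the consensus component are summable, so $\{P\m{x}(k)\}$ is Cauchy and converges to some $\m{1}_n\otimes \m{x}^*$; combined with $\m{z}(k)\to \m{0}$, this gives $\m{x}(k)\to \m{1}_n\otimes \m{x}^*$ geometrically. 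Boundedness of $\{\m{x}(k)\}$ is then a consequence of convergence. The main obstacle I anticipate is the non-symmetry of $\m{G}\m{L}^\mathrm{o}$, which precludes direct spectral arguments on the iteration matrix and forces the Lyapunov route above; in particular, ordinary square-summability of the increments (which the Lyapunov decrease alone provides) is \emph{not} strong enough to conclude that $P\m{x}(k)$ is Cauchy, so the spectral-gap step producing the strict contraction on $V$ is essential.
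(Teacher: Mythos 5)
Your proposal is correct, and it takes a genuinely different route from the paper. The paper's proof does not use a contraction on $V$ at all: it first establishes boundedness by observing that $(\m{1}_n^\top\otimes\m{I}_d)\m{G}^{-1}\m{x}(k)=\sum_{i=1}^n(\alpha_i\m{A}_i)^{-1}\m{x}_i(k)$ is invariant under \eqref{eq:asym_consensus_matrix_form}, then uses only the square-summability of the increments from Lemma \ref{lm:asym_non_increasing_function} to conclude $\m{L}^\mathrm{o}\m{x}(k)\to\m{0}$, so every limit point lies in $\text{null}(\m{L}^\mathrm{o})$; finally the conserved quantity pins down the limit uniquely as $\m{x}^*=(\sum_i\alpha_i^{-1}\m{A}_i^{-1})^{-1}\sum_i\alpha_i^{-1}\m{A}_i^{-1}\m{x}_i(0)$, which upgrades subsequential convergence to convergence of the whole sequence. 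You instead exploit the spectral gap of $\m{L}^\mathrm{o}$ together with $\sigma_{\min}(\m{G})>0$ (valid since each $\m{A}_i$ is invertible under Assumption \ref{ass:asym_matrix_weight}) to get $\|\m{x}(k+1)-\m{x}(k)\|^2\geq 2\sigma_{\min}(\m{G})^2\lambda_2(\m{L}_s)V(\m{x}(k))$ and hence $V(\m{x}(k+1))\leq\beta V(\m{x}(k))$ with $\beta<1$; all of your intermediate inequalities check out. Your route has the advantage of actually producing the geometric rate claimed in the theorem statement with an explicit contraction factor (the paper's argument, as written, only yields convergence and identification of the limit, not an explicit rate), at the cost of not identifying the consensus value; the paper's route gives the closed-form $\m{x}^*$ but leans on the conserved quantity, which is the one idea your write-up does not need. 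A minor caveat worth noting if you formalize this: Lemma \ref{lm:asym_non_increasing_function} as stated has coefficient $\gamma_{\min}\alpha_{\max}^{-1}$, while its proof actually delivers $\gamma_{\min}\alpha_{\max}^{-1}-L_V/2$; your argument survives with either constant since both are positive under the step-size condition, only the value of $\beta$ changes.
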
 
\begin{proof}
See Appendix \ref{app:asym_pos_weight_consensus}.
\end{proof}
\begin{Remark}
The result of Theorem \ref{thm:asym_pos_weight_consensus} further elaborates the robustness to orientation misalignments and flexibility of the consensus protocol \eqref{eq:asym_consensus_law} in modifying both the direction and magnitude of the displacement $\m{x}_i(k+1)-\m{x}_i(k)$ of each agent $i$ at each iteration $k$ (see Fig. \ref{fig:collision_avoidance}). Therefore, such flexible displacements can be utilized to design an obstacle avoidance scheme. For example, in Fig. \ref{fig:collision_avoidance}, agent $i$ changes its displacement to $\m{A}_i\m{u}_i$ in order to avoid collision with the obstacle (the yellow circle).
\end{Remark}
\section{Consensus under Asymmetric and Positive-Semidefinite Matrix Weights}\label{sec:projection_weight}
In this part, we consider the consensus scheme \eqref{eq:asym_consensus_law} under the scenario that the matrix weight $\m{A}_i$ associated with agent $i$ can be \textit{positive semi-definite}, $\forall i\in \mc{V}$. Therefore, $\m{A}_i$ is not necessarily invertible and consequently the convergence analysis in Section \ref{sec:asym_positive_weight} is not straightforwardly applicable for this case.
\subsection{Consensus Law}
We reuse the consensus law \eqref{eq:asym_consensus_law} below. In particular, each agent $i$ updates $\m{x}_i(k)$, for an initial vector $\m{x}_i(0)\in \mb{R}^d$, via
\begin{equation}\label{eq:constrained_consensus}
\m{x}_i(k+1)=\m{x}_i(k)-\alpha_i\sum_{j\in \mc{N}_i}\m{A}_{i}(\m{x}_i(k)-\m{x}_j(k)),\forall i\in \mc{V},
\end{equation}
where $\alpha_i> 0$ is a step size and $\m{A}_{i}\geq 0$ is a matrix weight, $\forall i \in \mc{V}$. 
We again use $\m{x}(k)=[\m{x}^{\top}_1(k),\ldots,\m{x}^{\top}_n(k)]^\top\in \mb{R}^{dn}$ to denote the stacked vector of all agent vectors. Let $\m{v}_i(k):=-\sum_{j\in \mc{N}_i}(\m{x}_i(k)-\m{x}_j(k))\in \mb{R}^d$ and hence \eqref{eq:constrained_consensus} can be written as
\begin{equation}\label{eq:asym_v_i}
\alpha_i\m{A}_i\m{v}_i(k)=\m{x}_i(k+1)-\m{x}_i(k).
\end{equation}
In the sequel, we show that the state vector of agent $i$, $\m{x}_i(k)$, is constrained in a linear subspace whose tangent space is spanned by the column space of $\m{A}_i$.
\subsection{Geometric Interpretation}
\begin{figure}[t]
\centering
\begin{tikzpicture}[scale=1]
\draw[fill=red,opacity=0.15] (-1,0,-1.5) -- (-1,0,1.5) -- (2.6,0,2.5) -- (2.6,0,-0.5) -- cycle;
\node[place, scale = 0.5] (xi) at (2.3,0,1.5)[label=right:$\m{x}_i(k)$]{};
\draw[-latex,thick, blue](xi)--(0.1,.,-2) node[right] {$\m{v}_i(k)$};
\draw[-latex,thick, blue](xi)--(0.8,.,0) node[left] {$\m{v}_i^t(k)$};
\draw[-latex,thick, red](0.8,.,0)--(0.1,.,-2) node[left] {$\m{v}_i^n(k)$};
 \RightAngle{(2.3,0,1.5)}{(0.8,.,0)}{(0.1,.,-2)};

\draw[-latex,thick](xi)--(0.7,0,1.3) node[below] {$\m{x}_i(k+1)$};
\node[place, scale = 0.5] at (0.7,0,1.3) []{};
\node[] at (-1.15,0,1.4)[label=right:$\mc{X}_i$]{};
\end{tikzpicture}
\caption{Geometric illustration of Proof of Lemma \ref{lm:constrained_feasible_direction}: the tangent component $\m{v}_i^t(k)=\m{P}_{\mc{TX}_i}\m{v}_i(k)$ and the normal component $\m{v}_i^n(k)=(\m{I}_d-\m{A}_i^\dag\m{A}_i)\m{v}_i(k)$. The normal vector satisfies $\m{v}_i^n(k)\perp \Delta \m{x}_i(k)$.}
\label{fig:asym_pos_semidef_weight}
\end{figure}
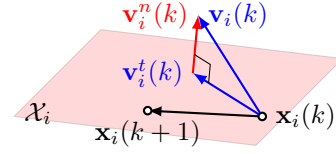
Since the matrix weight $\m{A}_i$ is positive semidefinite, we can decompose $\m{A}_i$ as $\m{A}_i=\m{V}_i\Sigma_i\m{V}_i^\top$, where $\Sigma_i:=\mathrm{diag}(\lambda_{i,1},\ldots,\lambda_{i,r_i},0,\ldots,0)\in \mb{R}^{d\times d}$ with $r_i$ ($1\leq r_i\leq d$) is being the rank of $\m{A}_i$ and $\lambda_{i,l}>0,l=1,\ldots, r_i,$ being the positive eigenvalues of $\m{A}_i$, and $\m{V}_i\in \mb{R}^{d\times d}$ is an orthogonal matrix. In addition, the first $r_i$ columns of $\m{V}_i$, i.e., $\m{V}_{i,1:r_i}:=[\m{v}_{i,1},\ldots,\m{v}_{i,r_i}]\in \mb{R}^{d\times r_i}$ form an  orthonormal basis of the range space of $\m{A}_i$. 

For each $i\in \mc{V}$ and given an initial vector $\m{x}_i(0)\in \mb{R}^d,$ we construct a (virtual) linear manifold (or subspace) $\mc{X}_i\subseteq \mb{R}^d$ such that $\m{x}_i(0)\in \mc{X}_i$ and the \textit{tangent space} of $\mc{X}_i$, denoted as $\mc{TX}_i$, satisfies $\text{span}(\mc{TX}_i)=\text{span}\{\m{v}_{i,1},\ldots,\m{v}_{i,r_i}\}$. It is noted that, given $\m{x}_i(0)\in \mb{R}^d$, such a subspace $\mc{X}_i$ is unique for every $i\in \mc{V}$. Furthermore, since $\text{range}(\m{A}_i)=\text{span}(\mc{TX}_i)$, it can be shown that  $\m{x}_i(k)\in \mc{X}_i$ for all time $k\in \mb{Z}^+$, $\forall i \in \mc{V}$. As a result, if the sequence $\{\m{x}(k)\}$ generated by \eqref{eq:constrained_consensus} converges to a consensus $\m{1}\otimes \m{x}^*$ for a point $\m{x}^*\in \mb{R}^d$ as $k\rightarrow \infty$, then the following condition must be satisfied.
\begin{Lemma}\label{lm:nonempty_intersection}
A necessary condition for the agents to achieve a consensus under the iterative update \eqref{eq:constrained_consensus} is the intersection of all manifolds $\mc{X}_i$ is non-empty $\mc{X}:=\cap_{i=1}^n\mc{X}_i\neq \emptyset$.
\end{Lemma}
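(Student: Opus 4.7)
The plan is to argue by contrapositive invariance: show that each agent's iterate is forever trapped in its own affine subspace $\mc{X}_i$, so any common consensus limit must lie in every $\mc{X}_i$ simultaneously.

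First I would establish the key invariance claim: for every $i\in \mc{V}$ and every $k\in \mb{Z}^+$, $\m{x}_i(k)\in \mc{X}_i$. The base case $\m{x}_i(0)\in \mc{X}_i$ holds by construction of $\mc{X}_i$. For the inductive step, note from \eqref{eq:asym_v_i} that the displacement satisfies $\m{x}_i(k+1)-\m{x}_i(k)=\alpha_i\m{A}_i\m{v}_i(k)\in \text{range}(\m{A}_i)$, and by the spectral decomposition $\m{A}_i=\m{V}_i\Sigma_i\m{V}_i^\top$ recalled just before the lemma, $\text{range}(\m{A}_i)=\text{span}\{\m{v}_{i,1},\ldots,\m{v}_{i,r_i}\}=\text{span}(\mc{TX}_i)$. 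Hence the displacement vector lies in the tangent space of $\mc{X}_i$, so $\m{x}_i(k+1)=\m{x}_i(k)+\alpha_i\m{A}_i\m{v}_i(k)\in \mc{X}_i$ whenever $\m{x}_i(k)\in \mc{X}_i$. This completes the induction.

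Next I would close out the argument by continuity. Suppose the agents achieve consensus, i.e., there exists $\m{x}^*\in \mb{R}^d$ such that $\lim_{k\to\infty}\m{x}_i(k)=\m{x}^*$ for every $i\in \mc{V}$. Since $\mc{X}_i$ is an affine subspace of $\mb{R}^d$, it is closed, and the invariance just established yields $\m{x}^*\in \mc{X}_i$ for every $i\in \mc{V}$. Therefore $\m{x}^*\in \cap_{i=1}^n\mc{X}_i=\mc{X}$, so $\mc{X}\neq \emptyset$, which is the contrapositive of the desired implication.

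The step I expect to require the most care is the invariance claim, specifically the identification $\text{range}(\m{A}_i)=\text{span}(\mc{TX}_i)$; once that is made explicit from the definition of $\mc{X}_i$ given in the paragraph preceding the lemma, the remainder of the argument reduces to closedness of an affine subspace and a routine induction. No further properties of the graph $\mc{G}$, of the step sizes $\alpha_i$, or of the auxiliary vectors $\m{v}_i(k)$ are needed, which is consistent with the fact that the lemma is purely a necessary condition obstruction to consensus.
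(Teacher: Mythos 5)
Your argument is correct and is essentially the paper's own reasoning: the paper proves this lemma only implicitly, in the paragraph immediately preceding it, by noting that $\text{range}(\m{A}_i)=\text{span}(\mc{TX}_i)$ forces $\m{x}_i(k)\in\mc{X}_i$ for all $k$, so a common limit must lie in every $\mc{X}_i$; you have simply made the induction and the closedness-of-$\mc{X}_i$ step explicit. (One cosmetic remark: what you prove is the direct implication, consensus implies $\mc{X}\neq\emptyset$, not its contrapositive, but the logic is unaffected.)
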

Obviously, such a point $\m{x}^*\in \mc{X}$. In addition, the intersection set $\mc{X}$ is either a singleton or a linear subspace.
\begin{Remark}
The assumption above appears to be somewhat strict in the sense that it requires a careful selection of the matrix weights and initial vectors. Furthermore, from the condition of the non-empty intersection of some degenerate subspaces of $\mb{R}^{dn}$, it appears that for an arbitrary vector $\m{x}(0)\in \mb{R}^{dn}$, the agents almost surely do not reach a consensus. By way of contrast, it is shown in Section \ref{sec:asym_positive_weight} that the agents can always achieve a consensus under the update \eqref{eq:asym_consensus_law} with positive definite matrix-weights, assuming that the update rates are sufficiently small.
\end{Remark}

Define $\m{A}_i^\dag:=\m{V}_i\Sigma_i^\dag\m{V}_i^\top\in \mb{R}^{d\times d}$, where $\Sigma_i^\dag:=\mathrm{diag}(\lambda_{i,1}^{-1},\ldots,\lambda_{i,r_i}^{-1},0,\ldots,0)\in \mb{R}^{d\times d}$. Then, it can be shown that $\m{A}_i^\dag$ is the Moore-Penrose \textit{generalized inverse} of $\m{A}_i$, which satisfies: (i) $\m{A}_i\m{A}_i^\dag\m{A}_i=\m{A}_i$, (ii) $\m{A}_i^\dag\m{A}_i\m{A}_i^\dag=\m{A}_i^\dag$, and (iii) both $\m{A}_i\m{A}_i^\dag$ and $\m{A}_i^\dag\m{A}_i$ are symmetric \cite{Horn1985}. Moreover, the \textit{orthogonal projection matrix} that projects any vector onto the tangent space $\mc{TX}_i$ can be defined as
\begin{equation}\label{eq:asym_orthogonal_projection}
\m{P}_{\mc{TX}_i}:=\m{A}_i^\dag\m{A}_i=\m{V}_{i,1:r_i}\m{V}_{i,1:r_i}^\top.
\end{equation}
Note that the projection matrix $\m{P}_{\mc{TX}_i}$ is positive semidefinite, idempotent $\m{P}_{\mc{TX}_i}^2=\m{P}_{\mc{TX}_i}$, and contains $r_i$ unity eigenvalues and the other $(d-r_i)$ eigenvalues are zeros. 

\subsection{Convergence Analysis}
The following lemma is useful in showing the convergence of the system \eqref{eq:constrained_consensus}.
\begin{Lemma}\label{lm:constrained_feasible_direction}
Let $\Delta \m{x}_i(k):=\m{x}_i(k+1)-\m{x}_i(k)$ and $\m{v}_i(k)$ is defined above Eq. \eqref{eq:asym_v_i}. Then, for all $i\in \mc{V}$, the following inequality holds:
\begin{equation}\label{eq:constrained_feasible_direction}
\Delta \m{x}^\top_i(k)\m{v}_i(k)\geq \alpha_i^{-1}\lambda^{-1}_{\max}(\m{A}_i)||\Delta \m{x}_i(k)||^2.
\end{equation}
\end{Lemma}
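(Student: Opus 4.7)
The plan is to reduce the claimed inequality to a spectral fact about the positive semidefinite matrix $\m{A}_i$, using the explicit form of $\Delta \m{x}_i(k)$ provided by Eq.~\eqref{eq:asym_v_i}. First, I would substitute $\Delta \m{x}_i(k) = \alpha_i \m{A}_i \m{v}_i(k)$ on both sides of the target inequality. This converts the left-hand side into $\alpha_i \, \m{v}_i(k)^\top \m{A}_i \m{v}_i(k)$ and the right-hand side into $\alpha_i \lambda^{-1}_{\max}(\m{A}_i) \|\m{A}_i \m{v}_i(k)\|^2$, so the claim collapses to the purely algebraic statement
\[
\m{v}_i(k)^\top \m{A}_i \m{v}_i(k) \;\geq\; \lambda^{-1}_{\max}(\m{A}_i)\, \|\m{A}_i \m{v}_i(k)\|^2.
\]

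Next I would exploit the spectral decomposition $\m{A}_i = \m{V}_i \Sigma_i \m{V}_i^\top$ introduced just before the lemma. Writing $\m{w} := \m{V}_i^\top \m{v}_i(k)$ reduces both sides to sums over the eigenvalues: $\m{v}_i^\top \m{A}_i \m{v}_i = \sum_{l=1}^{r_i} \lambda_{i,l} w_l^2$ and $\|\m{A}_i \m{v}_i\|^2 = \sum_{l=1}^{r_i} \lambda_{i,l}^2 w_l^2$. The desired inequality then follows termwise from $\lambda_{i,l}^2 \leq \lambda_{\max}(\m{A}_i)\, \lambda_{i,l}$, i.e.\ from the operator inequality $\m{A}_i^2 \preceq \lambda_{\max}(\m{A}_i)\, \m{A}_i$ valid for any positive semidefinite matrix. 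Note that any component of $\m{v}_i(k)$ in $\text{null}(\m{A}_i)$ contributes zero to both sides, which matches the geometric picture in Fig.~\ref{fig:asym_pos_semidef_weight}: only the tangent component $\m{v}_i^t(k) = \m{P}_{\mc{TX}_i} \m{v}_i(k)$ enters the estimate, while the normal component $\m{v}_i^n(k)$ is annihilated by $\m{A}_i$ and is orthogonal to $\Delta \m{x}_i(k)$.

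Finally, multiplying the reduced inequality by $\alpha_i$ on both sides and using $\|\Delta \m{x}_i(k)\|^2 = \alpha_i^2 \|\m{A}_i \m{v}_i(k)\|^2$ together with $\Delta \m{x}_i(k)^\top \m{v}_i(k) = \alpha_i \m{v}_i(k)^\top \m{A}_i \m{v}_i(k)$ recovers exactly \eqref{eq:constrained_feasible_direction}. There is no real obstacle here; the only place one must be careful is that $\m{A}_i$ is only positive semidefinite, so $\lambda^{-1}_{\max}(\m{A}_i)$ must be understood as the reciprocal of the largest eigenvalue (which is positive as long as $\m{A}_i \neq 0$, and the inequality is trivial otherwise). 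The proof is essentially a one-line spectral estimate once the substitution from \eqref{eq:asym_v_i} is made.
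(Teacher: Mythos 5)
Your proof is correct. It reaches the same spectral fact as the paper but by a different substitution: you plug $\Delta\m{x}_i(k)=\alpha_i\m{A}_i\m{v}_i(k)$ into \emph{both} sides and reduce the claim to the operator inequality $\m{A}_i^2\preceq\lambda_{\max}(\m{A}_i)\,\m{A}_i$, i.e.\ a quadratic form in $\m{v}_i(k)$. The paper instead works with a quadratic form in $\Delta\m{x}_i(k)$: it left-multiplies \eqref{eq:asym_v_i} by $\m{A}_i^\dag$, splits $\m{v}_i(k)$ into its tangent and normal components (discarding the normal part because $\m{v}_i^n(k)\perp\Delta\m{x}_i(k)$), arrives at $\Delta\m{x}_i^\top(k)\m{v}_i(k)=\alpha_i^{-1}\Delta\m{x}_i^\top(k)\m{A}_i^\dag\Delta\m{x}_i(k)$, and then bounds this below using the smallest \emph{nonzero} eigenvalue of $\m{A}_i^\dag$ together with the observation that $\Delta\m{x}_i(k)\perp\text{null}(\m{A}_i^\dag)$. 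The two arguments are dual to one another, and yours is arguably the cleaner of the two: it never introduces the pseudoinverse and it sidesteps the notational pitfall in the paper's last step, where $\lambda_{\min}(\m{A}_i^\dag)$ is literally zero whenever $r_i<d$ and must be read as the minimum eigenvalue restricted to $\text{range}(\m{A}_i)$. One small point worth making explicit in your write-up: the identification of the left-hand side with $\alpha_i\,\m{v}_i(k)^\top\m{A}_i\m{v}_i(k)$ uses the symmetry of $\m{A}_i$, which is guaranteed in this section by the assumed eigendecomposition $\m{A}_i=\m{V}_i\Sigma_i\m{V}_i^\top$ (the paper's proof needs the same fact, via $\text{null}(\m{A}_i)=\text{null}(\m{A}_i^\top)$); your handling of the degenerate case $\m{A}_i=\m{0}$ is also consistent with the paper's standing assumption $r_i\geq 1$.
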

\begin{proof}
First, by left-multiplying $\m{A}_i^\dag$ on both sides of \eqref{eq:asym_v_i}, one has
\begin{align*}
\alpha_i\m{A}_i^\dag\m{A}_i\m{v}_i(k)&=\m{A}_i^\dag\Delta \m{x}_i(k)\\
\Leftrightarrow \m{v}_i^t(k) &= \alpha_i^{-1}\m{A}_i^\dag\Delta \m{x}_i(k), \numberthis \label{eq:constrained_tangent_vi}
\end{align*}
where $\m{v}_i^t(k):=\m{A}_i^\dag\m{A}_i\m{v}_i(k)=\m{P}_{\mc{TX}_i}\m{v}_i(k)$ is the orthogonal projection of $\m{v}_i(k)$ onto the tangent space $\mc{TX}_i$, as illustrated in Fig. \ref{fig:asym_pos_semidef_weight}. Let $\m{v}_i^n(k):=\m{v}_i(k)-\m{v}_i^t(k)=(\m{I}_d-\m{A}_i^\dag\m{A}_i)\m{v}_i(k)$, which is orthogonal to the tangent space $\mc{TX}_i$ (or normal to the linear manifold $\mc{X}_i$). Then, consider the inner product
\begin{align*}
\Delta \m{x}^\top_i(k)\m{v}_i(k)&=\Delta \m{x}^\top_i(k)(\m{v}_i^t(k)+\m{v}_i^n(k))\\
&=\Delta \m{x}^\top_i(k)\m{v}_i^t(k)\\
&\overset{\eqref{eq:constrained_tangent_vi}}{=}\alpha_i^{-1}\Delta \m{x}^\top_i(k)\m{A}_i^\dag\Delta \m{x}_i(k),
\end{align*}
where the second equality follows from the relation $\m{v}_i^n(k)\perp \Delta \m{x}_i(k)$ (see also Fig. \ref{fig:asym_pos_semidef_weight}). Moreover, it is noted that $\m{A}_i^\dag\geq 0$ and from \eqref{eq:asym_v_i}, $\Delta \m{x}_i(k)\perp \text{null}(\m{A}_i)=\text{null}(\m{A}_i^\top)=\text{null}(\m{A}_i^\dag)$, for all $k\in \mb{Z}^+$. As a result, it follows from the preceding equation that
\begin{align*}
\Delta \m{x}^\top_i(k)\m{v}_i(k)&\geq \alpha_i^{-1}\lambda_{\min}(\m{A}_i^\dag)||\Delta \m{x}_i(k)||^2\\
&=\alpha_i^{-1}\lambda_{\max}^{-1}(\m{A}_i)||\Delta \m{x}_i(k)||^2,
\end{align*}
which completes the proof.
\end{proof}

To proceed, we define $\m{v}(k):=[\m{v}_1^\top(k),\ldots,\m{v}_n^\top(k)]^\top$ and consider the Lyapunov function $$V(\m{x}(k)):=\frac{1}{2} \m{x}^\top(k)\m{L}^\mathrm{o}\m{x}(k)=-\frac{1}{2} \m{x}^\top(k)\m{v}(k),$$ 
which is \textit{Lipschitz differentiable} with Lipschitz constant $L_{V}:=||\m{L}^\mathrm{o}||$. Furthermore, we let $\gamma_{\min}:=\min_{i\in \mc{V}} \lambda_{\max}^{-1}(\m{A}_i)$ and $\alpha_{\max}:=\max_{i\in \mc{V}} {\alpha_i}$.
Then, from the inequality \eqref{eq:constrained_feasible_direction} and by using a similar argument as in Proof of Lemma \ref{lm:asym_non_increasing_function}, we obtain the following result.
\begin{Lemma}\label{lm:constrained_non_increasing_function}
Suppose that the graph $\mc{G}$ is connected. Let the update rate $0<\alpha_i<2\gamma_{\min}/L_{V},\forall i=1,\ldots,n$. Then, the Lyapunov function $V(\m{x}(k))$ is non-increasing w.r.t. \eqref{eq:constrained_consensus}, i.e.,
\begin{align*}
0\leq V&(\m{x}(k+1))\leq\\
&V(\m{x}(k)) -\gamma_{\min}^{-1}\alpha_{\max} ||\m{x}(k+1)-\m{x}(k)||^2. \numberthis \label{eq:constrained_non_increasing_function}
\end{align*}
\end{Lemma}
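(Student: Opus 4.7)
The plan is to imitate the descent-lemma proof of Lemma \ref{lm:asym_non_increasing_function}, replacing the invertibility-based inner-product bound used there by the pseudoinverse estimate already packaged in Lemma \ref{lm:constrained_feasible_direction}. Since $\nabla V(\m{x}) = \m{L}^\mathrm{o}\m{x}$ is Lipschitz with constant $L_V=||\m{L}^\mathrm{o}||$, the standard quadratic upper bound for Lipschitz-differentiable functions gives
\begin{equation*}
V(\m{x}(k+1)) - V(\m{x}(k)) \leq \nabla V(\m{x}(k))^\top \Delta\m{x}(k) + \tfrac{L_V}{2}||\Delta\m{x}(k)||^2,
\end{equation*}
where $\Delta\m{x}(k) := \m{x}(k+1) - \m{x}(k)$. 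Because $\nabla V(\m{x}(k)) = \m{L}^\mathrm{o}\m{x}(k) = -\m{v}(k)$, the linear term reduces to $-\sum_{i=1}^n \m{v}_i(k)^\top \Delta\m{x}_i(k)$.

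Next I would apply Lemma \ref{lm:constrained_feasible_direction} agentwise to obtain $\m{v}_i(k)^\top \Delta\m{x}_i(k) \geq \alpha_i^{-1}\lambda_{\max}^{-1}(\m{A}_i)||\Delta\m{x}_i(k)||^2 \geq \alpha_{\max}^{-1}\gamma_{\min}||\Delta\m{x}_i(k)||^2$. Summing over $i$ and substituting into the descent inequality yields
\begin{equation*}
V(\m{x}(k+1)) - V(\m{x}(k)) \leq \left(\tfrac{L_V}{2} - \alpha_{\max}^{-1}\gamma_{\min}\right)||\Delta\m{x}(k)||^2,
\end{equation*}
and the hypothesis $\alpha_i < 2\gamma_{\min}/L_V$ for every $i$ makes the bracket strictly negative, yielding the claimed quadratic decrement. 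The lower bound $V(\m{x}(k+1)) \geq 0$ is immediate from the positive semidefiniteness of $\m{L}^\mathrm{o}$.

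The key conceptual obstacle relative to Lemma \ref{lm:asym_non_increasing_function} has already been discharged inside Lemma \ref{lm:constrained_feasible_direction}: because $\m{A}_i$ is merely positive semidefinite, one cannot bound $\m{v}_i^\top \Delta\m{x}_i$ by $\alpha_i^{-1}(\Delta\m{x}_i)^\top \m{A}_i^{-1}\Delta\m{x}_i$ as in the invertible case. The remedy is the tangent/normal decomposition of $\m{v}_i$ with respect to the manifold $\mc{X}_i$, which uses $\Delta\m{x}_i \in \text{range}(\m{A}_i) = \text{span}(\mc{TX}_i)$ to annihilate the normal component and lets one substitute the Moore--Penrose inverse $\m{A}_i^\dag$ for $\m{A}_i^{-1}$, whose smallest positive eigenvalue is $\lambda_{\max}^{-1}(\m{A}_i)$. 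With that substitution available, no genuinely new difficulty remains; the remainder is a verbatim repetition of the positive-definite argument, so I expect the formal write-up to be essentially a one-page adaptation of the appendix that proves Lemma \ref{lm:asym_non_increasing_function}.
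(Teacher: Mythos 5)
Your proposal is correct and follows exactly the route the paper intends: the paper gives no separate proof of this lemma but states that it follows from inequality \eqref{eq:constrained_feasible_direction} together with the descent-lemma argument of Lemma \ref{lm:asym_non_increasing_function}, which is precisely the combination you carry out (including the agentwise bound $\m{v}_i^\top\Delta\m{x}_i \geq \alpha_{\max}^{-1}\gamma_{\min}\|\Delta\m{x}_i\|^2$ and the quadratic upper bound from Lipschitz continuity of $\nabla V$). The only discrepancy is the constant in the final decrement, where your derivation yields $\gamma_{\min}\alpha_{\max}^{-1}-L_V/2$ rather than the $\gamma_{\min}^{-1}\alpha_{\max}$ printed in the lemma statement, but this mirrors an inconsistency already present in the paper's own Lemma \ref{lm:asym_non_increasing_function} and its use in the subsequent theorems, not a gap in your argument.
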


\begin{Theorem}\label{thm:constrained_consensus}
Suppose that the graph $\mc{G}$ is connected and for $\m{x}(0)\in \mb{R}^{dn}$, the constructed linear manifolds have a non-empty intersection, $\mc{X}:=\cap_{i=1}^n\mc{X}_i\neq \emptyset$. Then, if $0<\alpha_{\max}<2\gamma_{\min}/L_V$, the sequence $\{\m{x}(k)\}$ generated by \eqref{eq:constrained_consensus} is bounded and converges geometrically to a consensus point $\lim_{k\rightarrow \infty}\m{x}(k)=\m{1}_n\otimes \m{x}^*$ for a point $\m{x}^*\in \mc{X}$.
\end{Theorem}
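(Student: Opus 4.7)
My plan is to combine Lemma 4 with an auxiliary Moore--Penrose weighted Lyapunov function $W$ to secure boundedness of the iterates and summability of $V$, then extract a subsequential limit and upgrade it to full geometric convergence by re-centring $W$.

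First I would record the invariance $\m{x}_i(k)\in \mc{X}_i$ for every $k\ge 0$: since $\m{x}_i(k+1)-\m{x}_i(k)=\alpha_i\m{A}_i\m{v}_i(k)\in \text{range}(\m{A}_i)$ and $\m{x}_i(0)\in \mc{X}_i$ by construction, induction gives the claim. Fixing any $\m{x}^*\in \mc{X}$ (non-empty by hypothesis, via Lemma 6), set $\m{y}_i(k):=\m{x}_i(k)-\m{x}^*\in\text{range}(\m{A}_i)$; the translated iteration reads $\m{y}_i(k+1)=\m{y}_i(k)+\alpha_i\m{A}_i\m{v}_i(k)$ with stacked form $\m{v}(k)=-\m{L}^{\mathrm{o}}\m{y}(k)$. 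Under $\alpha_{\max}<2\gamma_{\min}/L_V$, Lemma 4 applied to $V(\m{x})=\tfrac{1}{2}\m{y}^\top \m{L}^{\mathrm{o}}\m{y}$ already delivers monotonicity of $V(\m{x}(k))$ and $\sum_k\|\m{x}(k+1)-\m{x}(k)\|^2<\infty$.

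The main step is to introduce
\[
W(k):=\sum_{i=1}^n\alpha_i^{-1}\m{y}_i^\top(k)\m{A}_i^\dag\m{y}_i(k)\ge 0,
\]
which is well defined and positive on non-consensus iterates because $\m{y}_i(k)\in\text{range}(\m{A}_i)$ and $\m{A}_i^\dag$ is positive-definite on that subspace. Using the Moore--Penrose identities $\m{A}_i\m{A}_i^\dag\m{A}_i=\m{A}_i$ and $\m{P}_{\mc{TX}_i}\m{y}_i(k)=\m{y}_i(k)$ (this last one is precisely where $\m{y}_i(k)\in\text{range}(\m{A}_i)$ is essential), the cross term collapses to $\m{y}_i^\top\m{v}_i$ and a direct expansion produces
\[
W(k+1)-W(k)=-2\m{y}(k)^\top\m{L}^{\mathrm{o}}\m{y}(k)+\sum_{i}\alpha_i\m{v}_i^\top(k)\m{A}_i\m{v}_i(k).
\]
The quadratic residual is controlled by $\alpha_i\lambda_{\max}(\m{A}_i)\le\alpha_{\max}/\gamma_{\min}$ and $\|\m{L}^{\mathrm{o}}\m{y}\|^2\le L_V\,\m{y}^\top\m{L}^{\mathrm{o}}\m{y}$, giving
\[
W(k+1)-W(k)\le -(2-\alpha_{\max}L_V/\gamma_{\min})\,\m{y}(k)^\top\m{L}^{\mathrm{o}}\m{y}(k),
\]
whose prefactor is strictly positive under the hypothesis. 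Monotonicity of $W$ yields boundedness of $\{\m{x}(k)\}$, and telescoping produces $\sum_k V(\m{x}(k))<\infty$.

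Combining summability with the monotonicity of $V$ from Lemma 4 forces $V(\m{x}(k))\to 0$, so by connectedness the disagreement $\m{x}(k)-\m{1}_n\otimes\bar{\m{x}}(k)$ vanishes. Bolzano--Weierstrass then extracts $\m{x}(k_\ell)\to \m{1}_n\otimes\m{x}^\infty$, and closedness of each $\mc{X}_i$ places $\m{x}^\infty\in\mc{X}$. To promote subsequential convergence to full convergence I would re-run the $W$-argument with $W$ re-centred at $\m{x}^\infty$ (valid precisely because $\m{x}^\infty\in\mc{X}$ keeps the displacements $\m{x}_i(k)-\m{x}^\infty$ in $\text{range}(\m{A}_i)$): the re-centred $W_{\m{x}^\infty}(k)$ is non-increasing and vanishes along $k_\ell$, so $W_{\m{x}^\infty}(k)\to 0$ globally and $\m{x}_i(k)\to\m{x}^\infty$ for every $i$. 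Geometric rate finally follows from the time-invariant linearity of $\m{x}(k+1)=(\m{I}_{dn}-\m{G}\m{L}^{\mathrm{o}})\m{x}(k)$ together with the strict Lyapunov decrease off the consensus subspace, which forces the restriction of $\m{I}_{dn}-\m{G}\m{L}^{\mathrm{o}}$ to the orthogonal complement of its fixed subspace to be a strict contraction. The principal obstacle will be the Moore--Penrose bookkeeping that makes the cross term collapse to $\m{y}_i^\top\m{v}_i$ and the error term collapse to $\m{v}_i^\top\m{A}_i\m{v}_i$ in the singular setting, and justifying that re-centring $W$ at the limit $\m{x}^\infty$ really converts subsequential convergence into a geometric contraction.
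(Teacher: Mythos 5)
Your argument is essentially correct but follows a genuinely different route from the paper's. For boundedness, the paper does not introduce a second Lyapunov function: it combines the bounded disagreement $\max_{(i,j)\in\mc{E}}\|\m{x}_i(k)-\m{x}_j(k)\|$ (from Lemma \ref{lm:constrained_non_increasing_function}) with the conserved quantity $\m{P}_{\mc{TX}}\sum_{i=1}^n\alpha_i^{-1}\m{A}_i^\dag(\m{x}_i(k)-\m{x}')$, obtained by left-multiplying the iteration by $\m{A}_i^\dag$ and projecting onto $\mc{TX}$. Your weighted function $W(k)=\sum_i\alpha_i^{-1}\m{y}_i^\top\m{A}_i^\dag\m{y}_i$ does the same job more directly, and your difference identity checks out: the cross term collapses because $\m{A}_i^\dag\m{A}_i=\m{P}_{\mc{TX}_i}$ is symmetric and fixes $\m{y}_i\in\mathrm{range}(\m{A}_i)$, and $\m{A}_i\m{A}_i^\dag\m{A}_i=\m{A}_i$ handles the quadratic residual; since $\m{A}_i^\dag$ is positive definite on $\mathrm{range}(\m{A}_i)$ with smallest positive eigenvalue $\lambda_{\max}^{-1}(\m{A}_i)$, $W$ dominates $\|\m{y}\|^2$ and boundedness follows. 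For convergence, the paper sums \eqref{eq:constrained_non_increasing_function} to get square-summable increments, asserts that $\m{x}(k)$ converges to some $\hat{\m{x}}$, and then runs a maximum-modulus/connectivity propagation over the relations $|\mc{N}_i|(\hat{\m{x}}_i-\m{x}')=\sum_{j\in\mc{N}_i}\m{P}_{\mc{TX}_i}(\hat{\m{x}}_j-\m{x}')$ to conclude $\hat{\m{x}}_i\equiv\m{x}^*\in\mc{X}$; it also extracts an explicit formula for $\m{x}^*$ from the invariant. Your telescoping of $W$ instead yields $\sum_k V(\m{x}(k))<\infty$, hence $V\to 0$ and vanishing disagreement, and the re-centring of $W$ at a subsequential limit $\m{x}^\infty\in\mc{X}$ cleanly upgrades subsequential to full convergence. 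This is arguably tighter than the paper's step from ``increments tend to zero'' to ``the sequence converges,'' which as stated is a non sequitur that your monotone-$W$ argument repairs.

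The one place where your sketch is genuinely thin is the geometric rate. The decrease $W(k+1)-W(k)\le-(2-\alpha_{\max}L_V/\gamma_{\min})\,\m{y}^\top\m{L}^{\mathrm{o}}\m{y}$ only controls the disagreement component of $\m{y}$, not $W$ itself, so it does not directly give $W(k+1)\le\rho W(k)$ with $\rho<1$; and the claim that strict Lyapunov decrease ``forces'' $\m{I}_{dn}-\m{G}\m{L}^{\mathrm{o}}$ to contract on the orthogonal complement of its fixed subspace does not follow without a separate spectral argument (note $\m{G}=\mathrm{diag}(\alpha_i\m{A}_i)$ is now singular, so the similarity trick of Lemma \ref{lm:sprectral_radius_unconstrained} does not apply verbatim). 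To be fair, the paper's own proof does not establish the geometric rate either, so this is a shared deficiency rather than a defect specific to your approach; completing it would require showing semi-convergence of $\m{I}_{dn}-\m{G}\m{L}^{\mathrm{o}}$ restricted to the invariant affine set $\prod_i\mc{X}_i$.
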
 
\begin{proof} 
See Appendix \ref{app:constrained_consensus}.
\end{proof}
\section{Simulation}\label{sec:sim}

\subsection{Matrix-weighted consensus under switching graphs}
\begin{figure}[t]
\centering
\begin{subfigure}[b]{0.45\textwidth}
\centering
\begin{tikzpicture}[scale=0.7]
\node[place] (4) at (1,0.) [label=below:$4$] {};
\node[place] (1) at (0,0) [label=below:$1$] {};
\node[place] (2) at (0,1.) [label=above:$2$] {};
\node[place] (3) at (1.,1.) [label=above:$3$] {};
\node[] (g1) at (.5,-0.4) [label=below:$\mathcal{G}_1$] {};


\draw [line width=1pt,red] (1)--(2);
\node[place] (42) at (1+2.5,0.) [label=below:$4$] {};
\node[place] (12) at (+2.5,0) [label=below:$1$] {};
\node[place] (22) at (+2.5,1.) [label=above:$2$] {};
\node[place] (32) at (1.+2.5,1.) [label=above:$3$] {};
\node[] (g2) at (.5+2.5,-0.4) [label=below:$\mathcal{G}_2$] {};


\draw [line width=1pt,red] (12)--(42);

\node[place] (43) at (1+5,0.) [label=below:$4$] {};
\node[place] (13) at (+5,0) [label=below:$1$] {};
\node[place] (23) at (+5,1.) [label=above:$2$] {};
\node[place] (33) at (1.+5,1.) [label=above:$3$] {};
\node[] (g3) at (.5+5,-0.4) [label=below:$\mathcal{G}_3$] {};


\draw [line width=1pt] (23)--(33);

\node[place] (44) at (1+7.5,0.) [label=below:$4$] {};
\node[place] (14) at (+7.5,0) [label=below:$1$] {};
\node[place] (24) at (+7.5,1.) [label=above:$2$] {};
\node[place] (34) at (1.+7.5,1.) [label=above:$3$] {};
\node[] (g3) at (.5+7.5,-0.4) [label=below:$\mathcal{G}_4$] {};


\draw [line width=1pt] (24)--(34);

\end{tikzpicture}
\caption{Switching graphs $\mc{G}_{\sigma(k)}$ of the network with $\mc{P}=\{1,2,3,4\}$ (\textcolor{red}{--} positive edges; -- positive semi-definite edges).}
\label{fig:sim_switching_graphs}
\end{subfigure}\\
\begin{subfigure}[b]{0.45\textwidth}
\centering
\includegraphics[height=4cm]{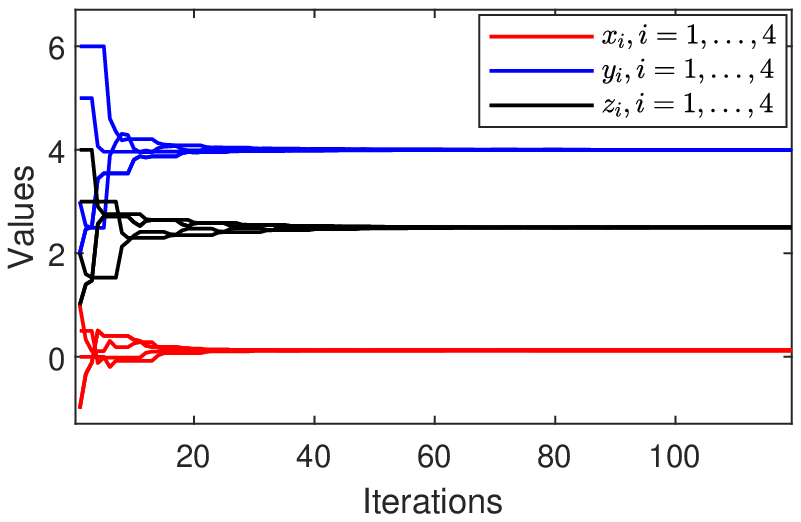}
\caption{Evolutions of two components of the agents's vectors.}
\label{fig:consensus_switching_coordinates}
\end{subfigure}
\caption{Consensus of four agents under \eqref{eq:consensus_switching_net}.}
\end{figure}
Consider a system of four agents whose state vectors are defined in $\mb{R}^2$. The graphs of the system $\mc{G}_{\sigma},\sigma=1,2,3,4$ are illustrated in Fig. \ref{fig:sim_switching_graphs}, whose switching signal $\sigma(k),k\in \mb{Z}^+$ is given as follows.
\begin{equation}\label{eq:switching_signal}
\sigma(k_t) = \small\begin{cases}
1 &\mbox{if $k=8t$ or $8t+1$}\\
2 &\mbox{if $k=8t+2$ or $8t+3$}\\
3 &\mbox{if $k=8t+4$ or $8t+5$}\\
4 &\mbox{if $k=8t+6$ or $8(t+1)-1$}
\end{cases}.
\end{equation}
Note that $\mc{G}_{\sigma(k)}$ is jointly connected in every time interval $[k_t,k_{t+1}-1]=[8t,8t+7],t\in \mb{Z}^+$, while there is only one positive definite/semi-definite edge in each graph $\mc{G}_{\sigma(k)},\sigma(k)\in \{1,2,3,4\}$. The matrix-weights of the system are given as:
\begin{align*}
\m{A}_{12}(\mc{G}_1)&=\small\begin{bmatrix}
1 &0 &0\\
0 &1.2 &0.2\\
0 &0.2 &1 
\end{bmatrix},
\m{A}_{14}(\mc{G}_2)=\begin{bmatrix}
1 &0.5 &0\\
0.5 &1 &0\\
0 &0 &1.3
\end{bmatrix},\\
\m{A}_{23}(\mc{G}_3)&=\small\begin{bmatrix}
1 &0.2 &0\\
0.2 &1.2 &0\\
0 &0 &0
\end{bmatrix},
\m{A}_{23}(\mc{G}_4)=\begin{bmatrix}
0 &0 &0\\
0 &1 &0.2\\
0 &0.2 &1.2
\end{bmatrix},
\end{align*}
and are zero matrices otherwise. Note that $\m{A}_{23}(\mc{G}_3)$ and $\m{A}_{23}(\mc{G}_4)$ are positive semidefinite, while it can be verified that $\m{A}_{23}(\mc{G}_3)+\m{A}_{23}(\mc{G}_4)>0$.

The initial vectors of the agents are given as: $\m{x}_1(0)=[-1,2,1]^\top,\m{x}_2(0)=[1,3,2]^\top,$ $\m{x}_3(0)=[0,6,3]^\top,$ and $\m{x}_4(0)=[0.5,5,4]^\top$. It observed in Fig. \ref{fig:consensus_switching_coordinates} that the agents achieve a consensus as coordinates of $\m{x}_i$, say $x_i,y_i$ and $z_i,i=1,2,3,4$ converge to the same values, respectively.
\subsection{Consensus of multi-agent systems with asymmetric and positive-semidefinite matrix weights}
Consider a system of five agents whose state vectors are defined in $3$D and interaction graph is connected. We associate each agent $i$ with a state vector $\m{p}_i\in \mb{R}^3$. In addition, agent $i$ can measure the relative vectors $(\m{p}_i-\m{p}_j)$ to some neighboring agents $j$. The initial vectors of the agents are given as $\m{p}_1(0)=[-2,-2,4]^\top,\m{p}_2(0)=[1,-3,2]^\top,$ $\m{p}_3(0)=[0,7,0]^\top,\m{p}_4(0)=[5,1,0]^\top,$ and $\m{p}_5(0)=[-1,5,0]^\top$. The matrix weights of the agents are given as follows:
\begin{align*}
\m{A}_1&=\m{A}_2=\small\begin{bmatrix}
0.6518   &-0.2604   &-0.3914\\
   -0.2604    &0.3086   &-0.0482\\
   -0.3914   &-0.0482    &0.4396
\end{bmatrix},\\
\m{A}_3&=\m{A}_4=\m{A}_5=\small\begin{bmatrix}
0.4         &0   &0\\
0   & 1  &0\\
0   &0  & 0
\end{bmatrix}.
\end{align*}
Such (positive semi-definite) matrix weights are chosen such that $\m{p}_1$ and $\m{p}_2$ lie in the plane $\mc{X}_1: x+y+z = 0$, while the evolutions of $\m{p}_3,\m{p}_4$ and $\m{p}_5$ are constrained in the plane $\mc{X}_2: z = 0$, as illustrated in Fig. \ref{fig:consensus_of_robots}.  

The evolutions of the agents' state vectors generated by \eqref{eq:constrained_consensus} are depicted in Fig. \ref{fig:consensus_of_robots}. It is observed that the state vectors converge to a consensus in the set $\mc{X}_1\cap\mc{X}_2$.
\begin{figure}[t]
\centering
\begin{subfigure}[b]{0.45\textwidth}
\centering
\includegraphics[height=3.5cm]{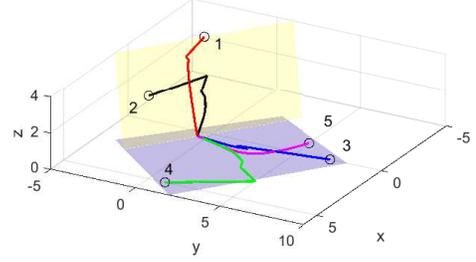}
\caption{Evolutions of the agent vectors (solid lines).}
\end{subfigure}\\
\begin{subfigure}[b]{0.45\textwidth}
\centering
\includegraphics[height=4cm]{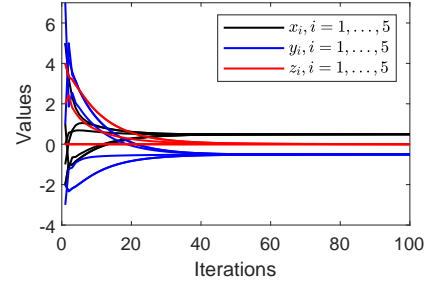}
\caption{Evolutions of the coordinates of the agents.}
\end{subfigure}
\caption{Consensus control of five agents in $\mb{R}^3$ under consensus law \eqref{eq:constrained_consensus}. State vectors of agents $\{1,2\}$ and $\{3,4,5\}$ lie in two distinct planes.}
\label{fig:consensus_of_robots}
\end{figure}
\section{Conclusion}\label{sec:conclusion}
In this paper, we investigated discrete-time matrix-weighted consensus schemes for multi-agent systems over undirected and connected graphs under various scenarios. When the network has symmetric matrix-weights, we showed that a consensus is achieved if the agents' update rates are sufficiently small and the interaction graph has a positive spanning tree. When the network graph is time-varying, joint connectedness condition of the network graph is sufficient for the agents to reach a consensus.
In a special case of consensus with non-symmetric matrix weights, under some certain conditions, the agents are shown to a achieve a consensus.

An application of the discrete-time matrix-weighted consensus to distributed optimization and machine learning is left as future work.
\appendix
\renewcommand{\thesection}{A\arabic{section}}
%
\subsection{Proof of Lemma \ref{lm:sprectral_radius_unconstrained}}\label{app:sprectral_radius_unconstrained}
i) We first show that $2\m{G}-\m{L}>0$. Indeed, for an arbitrary nonzero vector $\m{y}=[\m{y}_1^\top,\ldots,\m{y}_n^\top]^\top\in \mb{R}^{nd}$, we have
\begin{align*}
&\m{y}^\top(2\m{G}-\m{L})\m{y}=\m{y}^\top(\m{D}+\m{A})\m{y}\\
&\qquad+2\m{y}^\top\mathrm{diag}\Big(\big\{(||\m{D}_i||+\beta_i)\m{I}_d-\m{D}_i\big\}_{i=1}^n\Big)\m{y}\\
&=\textstyle\sum_{(i,j)\in \mc{E}}(\m{y}_i+\m{y}_j)^\top\m{A}_{ij}(\m{y}_i+\m{y}_j)\\
&\qquad+2\m{y}^\top\mathrm{diag}\Big(\big\{(||\m{D}_i||+\beta_i)\m{I}_d-\m{D}_i\big\}_{i=1}^n\Big)\m{y}> 0.
\end{align*}
Since $\m{G}$ is diagonal and positive definite we can write $\m{G}=\m{G}^\frac{1}{2}\m{G}^\frac{1}{2}$ with $\m{G}^\frac{1}{2}$ is also a positive definite matrix. Multiplying $\m{G}^{-\frac{1}{2}}$ on both sides of $2\m{G}-\m{L}>0$ yields
\begin{equation*}
2\m{I}_{dn}-\m{G}^{-\frac{1}{2}}\m{L}\m{G}^{-\frac{1}{2}}>0.
\end{equation*}
In addition, it is noted that $\m{G}^{-\frac{1}{2}}\m{L}\m{G}^{-\frac{1}{2}}\geq 0$ due to the positive definiteness of $\m{G}^{-\frac{1}{2}}$ and the positive semidefiniteness of $\m{L}$. Since the matrices $\m{G}^{-\frac{1}{2}}\m{L}\m{G}^{-\frac{1}{2}}$ and $\m{G}^{-1}\m{L}$ are similar, i.e., $\m{G}^{-\frac{1}{2}}\m{L}\m{G}^{-\frac{1}{2}}=\m{G}^{\frac{1}{2}}(\m{G}^{-1}\m{L})\m{G}^{-\frac{1}{2}}$, the two matrices share the same spectrum. It follows that $\lambda(\m{G}^{-1}\m{L})\in [0,2)$. Consequently, $-1<\lambda(\m{I}_{dn}-\m{G}^{-1}\m{L})\leq 1$ and hence $\rho(\m{I}_{dn}-\m{G}^{-1}\m{L})=1$. The eigenvectors correspond to the unity eigenvalues of $(\m{I}_{dn}-\m{G}^{-1}\m{L})$ are $\m{v}\in \text{null}(\m{L})$.

We show ii) as follows. It follows from i) that the eigenvectors $\m{v}_i\in \mb{R}^{dn},i=1,\ldots l_1, d\leq l_1<dn$ corresponding to the unity eigenvalues of $(\m{I}_{dn}-\m{G}^{-1}\m{L})$ are the eigenvectors of $\m{L}$ corresponding to the zero eigenvalues of $\m{L}$. Since the Laplacian $\m{L}$ is real symmetric its eigenvectors are linearly independent, or equivalently $\{\m{v}_i\}_{i=1}^{l_1}$ are linearly independent. As a result, the unity eigenvalue $1$ of $(\m{I}_{dn}-\m{G}^{-1}\m{L})$ is \textit{semisimple} as its geometric and algebraic multiplicity are equal. This shows ii). 

\subsection{Proof of Theorem \ref{thm:unconstrained_convergence}}\label{app:unconstrained_convergence}
It follows from Eqs. \eqref{eq:unconstrained_update_matrix_form} and \eqref{eq:infinity_matrix} we have that
\begin{align*}
&\lim_{k\rightarrow \infty}\m{x}(k)=(\m{I}_{dn}-\m{G}^{-1}\m{L})^\infty\m{x}(0)\\
&=(\m{1}_n\otimes \m{I}_d)[\m{u}_1,\ldots,\m{u}_d]^\top\m{x}(0)+\textstyle\sum_{i=d+1}^{l_1}\big(\m{u}_i^\top\m{x}(0)\big)\m{v}_i\\
&=(\m{1}_n\otimes \m{I}_d)\hat{\m{x}}+\textstyle\sum_{i=d+1}^{l_1}\big(\m{u}_i^\top\m{x}(0)\big)\m{v}_i,
\end{align*}
where $\hat{\m{x}}:=[\m{u}_1,\ldots,\m{u}_d]^\top\m{x}(0)\in \mb{R}^{d}$. It is noted that $\m{v}_i \not\in \text{range}(\m{1}_n\otimes \m{I}_d),\forall i=d+1,\ldots, l_1$, and such an initial vector $\m{x}(0)\perp \text{range}\{\m{u}_i\}_{d+1}^{l_1}$ is contained in a zero measure set. It then follows from the preceding relation that $\m{x}(k)\rightarrow (\m{1}_n\otimes \m{I}_d)\hat{\m{x}}$ as $k\rightarrow \infty$, for an arbitrary initial vector $\m{x}(0)\in \mb{R}^{dn}$, if and only if $\text{null}(\m{L})=\m{1}_n\otimes \m{I}_d$.

We show the geometric convergence of $\m{x}(k)\rightarrow (\m{1}_n\otimes \hat{\m{x}})$ as follows.
\begin{align*}
&||\m{x}(k)-(\m{1}_n\otimes \hat{\m{x}})||=\\
&=||\left((\m{I}_{dn}-\m{G}^{-1}\m{L})^k-(\m{1}_n\otimes \m{I}_d)[\m{u}_1,\ldots,\m{u}_d]\right)\m{x}(0)||\\
&=||(\m{V}\m{J}^k\m{V}^{-1}-\m{V}\m{J}^\infty\m{V}^{-1})\m{x}(0)||\\
&\leq||\mathrm{diag}(\m{0},\m{J}_{l_2}^k,\ldots,\m{J}_{l_p}^k)|| ||\m{x}(0)||\\
&\leq |\lambda_{d+1}|^k||\m{x}(0)||,
\end{align*}
where $|\lambda_{d+1}|<1$ is the second largest eigenvalue in magnitude of $(\m{I}_{dn}-\m{G}^{-1}\m{L})$. This completes the proof.

\subsection{Proof of Theorem \ref{thm:switching_graph}}\label{app:switching_graph}
 It is first noted that $(\m{1}_n^\top\otimes \m{I}_d)\m{x}(k+1)=(\m{1}_n^\top\otimes \m{I}_d)\m{x}(0)$ is invariant with respect to \eqref{eq:consensus_switching_net_matrix} and so is the network centroid $\bar{\m{x}}=(\m{1}_n^\top/n\otimes \m{I}_d)\m{x}(k)$. Let $\tilde{\m{x}}_i(k)=\m{x}_i(k)-\bar{\m{x}}$ and $\tilde{\m{x}}(k)=[\tilde{\m{x}}_1^\top(k),\ldots,\tilde{\m{x}}_n^\top(k)]^\top$. Then, we can rewrite \eqref{eq:consensus_switching_net_matrix} as 
\begin{equation}\label{eq:switching_net_err_equation}
\tilde{\m{x}}(k+1) = \tilde{\m{x}}(k)-\alpha\m{L}_{\sigma(k)}\tilde{\m{x}}(k).
\end{equation}
Consider the Lyapunov function $V(\tilde{\m{x}}(k))=\tilde{\m{x}}(k)^\top\tilde{\m{x}}(k)$, which is positive definite and radially unbounded. Furthermore, w.r.t. \eqref{eq:consensus_switching_net_matrix} one has
\begin{align*}
&V(\tilde{\m{x}}(k+1)) - V(\tilde{\m{x}}(k))\\
&=\tilde{\m{x}}(k)^\top(\m{I}_{dn}-\alpha\m{L}_{\sigma(k)})^\top(\m{I}_{dn}-\alpha\m{L}_{\sigma(k)})\tilde{\m{x}}(k)\\
&\quad-\tilde{\m{x}}(k)^\top\tilde{\m{x}}(k)\\
&=-\alpha\tilde{\m{x}}(k)^\top(2\m{L}_{\sigma(k)}-\alpha\m{L}_{\sigma(k)}^2)\tilde{\m{x}}(k)\\
&\leq -(\mu^{-1}-\alpha)\tilde{\m{x}}(k)^\top\m{L}_{\sigma(k)}^2\tilde{\m{x}}(k)\\
&=-(\mu^{-1}-\alpha)||\m{L}_{\sigma(k)}\tilde{\m{x}}(k)||^2\leq 0, \numberthis \label{eq:switching_graph_nonincreasingV}
\end{align*}
where the first inequality follows from the fact that
$\m{L}_{\sigma(k)}-(1/\mu)\m{L}_{\sigma(k)}^2 \geq 0$ with $\mu = \max_{\sigma(k)}||\m{L}_{\sigma(k)}||$, and in the last inequality we use the condition $\alpha < 1/\mu$. It follows that $V(\tilde{\m{x}}(k+1))$ is non-increasing w.r.t. \eqref{eq:consensus_switching_net_matrix}  and hence $\{\m{x}(k)\}$ is bounded. In addition, $\lim_{k\rightarrow\infty}V(\tilde{\m{x}}(k))=\sum_{i=1}^k\Big(V(\tilde{\m{x}}(i))-V(\tilde{\m{x}}(i-1))\Big) + V(\tilde{\m{x}}(0))$ exists. This further implies that the sequence $\{V(\tilde{\m{x}}(k+1)) - V(\tilde{\m{x}}(k))\}$ is \textit{summable} and consequently, $\lim_{k\rightarrow\infty}V(\tilde{\m{x}}(k+1)) - V(\tilde{\m{x}}(k))=0$. Thus, by \eqref{eq:switching_graph_nonincreasingV}, we have
\begin{equation}\label{eq:switching_graph_steady_state_k}
\lim_{k\rightarrow\infty}\m{L}_{\sigma(k)}\tilde{\m{x}}(k)=\m{0}.
\end{equation}
From this relation, we next show that the following relation holds for all $s\in \mb{Z}^+$
\begin{equation}\label{eq:switching_graph_steady_state_k+1}
\lim_{k\rightarrow\infty}\m{L}_{\sigma(k+s)}\tilde{\m{x}}(k)=\m{0}.
\end{equation}
To proceed, using the relation $\tilde{\m{x}}(k)=\tilde{\m{x}}(k+1)+\alpha\m{L}_{\sigma(k)}\tilde{\m{x}}(k)$ (due to \eqref{eq:switching_net_err_equation}), one has 
\begin{align*}
&\m{L}_{\sigma(k+s)}\tilde{\m{x}}(k)=\m{L}_{\sigma(k+s)}(\tilde{\m{x}}(k+1)+\alpha\m{L}_{\sigma(k)}\tilde{\m{x}}(k))\\
&=\m{L}_{\sigma(k+s)}(\tilde{\m{x}}(k+2)+\alpha\m{L}_{\sigma(k+1)}\tilde{\m{x}}(k+1)+\alpha\m{L}_{\sigma(k)}\tilde{\m{x}}(k))\\
&=\m{L}_{\sigma(k+s)}\Big(\tilde{\m{x}}(k+s)+\alpha\m{L}_{\sigma(k+s-1)}\tilde{\m{x}}(k+s-1)+\ldots\\
&\quad+\alpha\m{L}_{\sigma(k+1)}\tilde{\m{x}}(k+1)+\alpha\m{L}_{\sigma(k)}\tilde{\m{x}}(k)\Big)
\end{align*}
Therefore, \eqref{eq:switching_graph_steady_state_k+1} follows from the fact that $\lim_{k\rightarrow \infty}\m{L}_{\sigma(k+s)}\tilde{\m{x}}(k+s)=\lim_{k\rightarrow \infty}\m{L}_{\sigma(k)}\tilde{\m{x}}(k)=\m{0}$ for all $s\in \mb{Z}^+$. Moreover, it is follows from \eqref{eq:switching_graph_steady_state_k+1} that 
$$\lim_{k_t\rightarrow\infty}\m{L}_{\sigma(k_t+s)}\tilde{\m{x}}(k_t)=\m{0},~\forall s\in \mb{Z}^+.$$
By summing the preceding relations over $s$ from $0$ to $(k_{t+1}-k_t-1)$, we have
\begin{equation}
\lim_{k_t\rightarrow\infty}\sum_{k=k_t}^{k_{t+1}-1}\m{L}_{\sigma(k)}\tilde{\m{x}}(k_t)=\m{0}.
\end{equation}
Since $\tilde{\m{x}}(k_t)\perp\text{null}(\sum_{k=k_t}^{k_{t+1}-1}\m{L}_{\sigma(k)})=\text{range}(\m{1}_n\otimes \m{I}_d)$ due to the joint connectedness condition in Assumption \ref{ass:jointly_connected}, we have $\lim_{k_t\rightarrow\infty}\tilde{\m{x}}(k_t)=\m{0}$. This completes the proof.

\subsection{Proof of Lemma \ref{lm:asym_non_increasing_function}}\label{app:asym_non_increasing_function}
First, it follows from \eqref{eq:asym_consensus_matrix_form} and \eqref{eq:asym_feasible_direction} we have that
\begin{align*}
&(\m{x}(k+1)-\m{x}(k))^\top\m{L}^\mathrm{o}\m{x}(k)\\
&=-(\m{x}(k+1)-\m{x}(k))^\top\m{G}^{-1}(\m{x}(k+1)-\m{x}(k))\\
&= -\sum_{i=1}^n\frac{1}{\alpha_i}(\m{x}_i(k+1)-\m{x}_i(k))^\top\m{A}_i^{-1}(\m{x}_i(k+1)-\m{x}_i(k))\\
&\leq-\gamma_{\min}\alpha_{\max}^{-1}||\m{x}(k+1)-\m{x}(k)||^2.
\end{align*}
Then, since $\nabla V$ is Lipschitz with constant $L_V$, we have \cite{Bertsekas1989}
\begin{align*}
&V(\m{x}(k+1))-V(\m{x}(k))\leq (\m{x}(k+1)-\m{x}(k))^\top\nabla V(\m{x}(k))\\
&\qquad+(L_V/2)||\m{x}(k+1)-\m{x}(k)||^2\\
&= (\m{x}(k+1)-\m{x}(k))^\top\m{L}^\mathrm{o}\m{x}(k)+\frac{L_V}{2}||\m{x}(k+1)-\m{x}(k)||^2\\
&\leq- (\gamma_{\min}\alpha_{\max}^{-1}-L_V/2)||\m{x}(k+1)-\m{x}(k)||^2\leq 0,
\end{align*}
if $0<\alpha_{\max}<2\gamma_{\min}/L_V$.

\subsection{Proof of Theorem \ref{thm:asym_pos_weight_consensus}}\label{app:asym_pos_weight_consensus}
It follows from the non-increase of $V(\m{x}(k))=\sum_{(i,j)\in \mc{E}}||\m{x}_i(k)-\m{x}_j(k)||^2$ that $\max_{(i,j)\in \mc{E}}||\m{x}_i(k)-\m{x}_j(k)||$ is bounded. Moreover, from \eqref{eq:asym_consensus_matrix_form}, one has $(\m{1}_n\otimes \m{I}_d)\m{G}^{-1}\m{x}(k+1)=(\m{1}_n\otimes \m{I}_d)\m{G}^{-1}\m{x}(k)-(\m{1}_n\otimes \m{I}_d)\m{G}^{-1}\m{G}\m{L}^\mathrm{o}\m{x}(k)=(\m{1}_n\otimes \m{I}_d)\m{G}^{-1}\m{x}(k)$, which implies that $\sum_{i=1}^n(\alpha_i\m{A}_i)^{-1}\m{x}_i(k)$ is invariant. As a result, $\{\m{x}(k)\}$ is bounded, and hence there exists a convergent subsequence $\{\m{x}(k_l)\},l \in \mb{Z}^+$ and a limit point $\m{x}^{\dag}\in \mb{R}^{dn}$ such that $\lim_{l\rightarrow \infty}\m{x}(k_l)=\m{x}^\dag$.

By summing up the inequalities in \eqref{eq:asym_non_increasing_function} over $k$ from $0$ to $\infty$, we have
\begin{align*}
\sum_{k=0}^{\infty}||\m{x}(k+1)-\m{x}(k)||^2&\leq \gamma_{\min}^{-1}\alpha_{\max} \big(V(\m{x}(0))-V(\m{x}(\infty))\big)\\
&\leq \gamma_{\min}^{-1}\alpha_{\max}V(\m{x}(0)).
\end{align*}
 It follows that $(\m{x}(k+1)-\m{x}(k))$ is a \textit{square-summable} sequence and hence $||\m{x}(k+1)-\m{x}(k)||\rightarrow \m{0}$ as $k\rightarrow \infty$. Therefore, from \eqref{eq:asym_consensus_matrix_form}, $||\m{L}^\mathrm{o}\m{x}(k)||\leq||\m{G}^{-1}||||\m{x}(k+1)-\m{x}(k)||\rightarrow 0$ as $k\rightarrow \infty$. As a result, $\m{x}^\dag \in \text{null}(\m{L}^\mathrm{o})$ and hence $\m{x}^\dag=\m{1}_n\otimes \m{x}^*$ for a point $\m{x}^*\in \mb{R}^d$.

Since $\sum_{i=1}^n(\alpha_i\m{A}_i)^{-1}\m{x}_i(k)$ is invariant, we have $(\sum_{i=1}^n\alpha_i^{-1}\m{A}_i^{-1})\m{x}^*=\sum_{i=1}^n\alpha_i^{-1}\m{A}_i^{-1}\m{x}_i(0)\Leftrightarrow \m{x}^*=(\sum_{i=1}^n\alpha_i^{-1}\m{A}_i^{-1})^{-1}\sum_{i=1}^n\alpha_i^{-1}\m{A}_i^{-1}\m{x}_i(0)$, which is a fixed point. It follows that every sequence $\{\m{x}(k),k\in \mb{Z}^+\}$ converges to $\m{1}_n\otimes \m{x}^*$. 

\subsection{Proof of Theorem \ref{thm:constrained_consensus}}\label{app:constrained_consensus}
We firstly show the boundedness of the sequence $\{\m{x}(k)\}$ generated by \eqref{eq:constrained_consensus} and then prove its convergence to a consensus.
\subsubsection{Boundedness evolution}
It follows from the non-increase of $V(\m{x}(k))=\sum_{(i,j)\in \mc{E}}||\m{x}_i(k)-\m{x}_j(k)||^2$ that $\max_{(i,j)\in \mc{E}}||\m{x}_i(k)-\m{x}_j(k)||$ is bounded. Moreover, for an arbitrary point $\m{x}^\prime\in \mc{X}$, we can rewrite \eqref{eq:constrained_consensus} as
\begin{align*}
&(\m{x}_i(k+1)-\m{x}^\prime)=(\m{x}_i(k)-\m{x}^\prime)\\
&-\alpha_i\textstyle\sum_{j\in \mc{N}_i}\m{A}_{i}\Big((\m{x}_i(k)-\m{x}^\prime)-(\m{x}_j(k)-\m{x}^\prime)\Big),\numberthis\label{eq:constrained_consensus_relative_state}
\end{align*}
for all $i\in \mc{V}$. Left-multiplying $\alpha_i^{-1}\m{A}_{i}^\dag$ on both sides of \eqref{eq:constrained_consensus_relative_state} yields 
\begin{align*}
&\alpha_i^{-1}\m{A}_{i}^\dag(\m{x}_i(k+1)-\m{x}^\prime)=\alpha_i^{-1}\m{A}_{i}^\dag(\m{x}_i(k)-\m{x}^\prime)\\
&-\textstyle\sum_{j\in \mc{N}_i}\m{P}_{\mc{TX}_i}\Big((\m{x}_i(k)-\m{x}^\prime)-(\m{x}_j(k)-\m{x}^\prime)\Big).\numberthis\label{eq:relative_state_project_to_Xi}
\end{align*}
Consider any nonzero vector $\m{v}_i=\m{v}_i^t+\m{v}_i^n$, where $\m{v}_i^t=\m{P}_{\mc{TX}_i}\m{v}_i$ and $\m{v}_i^n=(\m{I}_d-\m{A}_i^\dag\m{A}_i)\m{v}_i$ (see e.g. Fig. \ref{fig:asym_pos_semidef_weight}). Let $\m{P}_{\mc{TX}}\in \mb{R}^{d\times d}$ be the projection matrix that projects any vector onto the tangent space $\mc{TX}$. Note that when $\mc{X}$ is a singleton, $\m{P}_{\mc{TX}}=\m{0}$. Then, for every $i\in \mc{V}$, we have
\begin{align*}
\m{P}_{\mc{TX}}\m{v}_i&=\m{P}_{\mc{TX}}\m{v}_i^t\\
\Leftrightarrow \m{P}_{\mc{TX}}\m{v}_i&=\m{P}_{\mc{TX}}\m{P}_{\mc{TX}_i}\m{v}_i. \numberthis
\end{align*}
Using the preceding relation and by left-multiplying $\m{P}_{\mc{TX}}$ on both sides of \eqref{eq:relative_state_project_to_Xi}, for all $i\in \mc{V}$ we obtain
\begin{align*}
&\alpha_i^{-1}\m{P}_{\mc{TX}}\m{A}_{i}^\dag(\m{x}_i(k+1)-\m{x}^\prime)=\alpha_i^{-1}\m{P}_{\mc{TX}}\m{A}_{i}^\dag(\m{x}_i(k)-\m{x}^\prime)\\
&-\m{P}_{\mc{TX}}\textstyle\sum_{j\in \mc{N}_i}\Big((\m{x}_i(k)-\m{x}^\prime)-(\m{x}_j(k)-\m{x}^\prime)\Big).\numberthis\label{eq:}
\end{align*}
By adding the preceding equations over $i$ from $1$ to $n$, one has $\m{P}_{\mc{TX}}\sum_{i=1}^n\alpha_i^{-1}\m{A}_{i}^\dag(\m{x}_i(k+1)-\m{x}^\prime)=\m{P}_{\mc{TX}}\sum_{i=1}^n\alpha_i^{-1}\m{A}_{i}^\dag(\m{x}_i(k)-\m{x}^\prime)$ $\Leftrightarrow \m{P}_{\mc{TX}}\sum_{i=1}^n\alpha_i^{-1}\m{A}_{i}^\dag(\m{x}_i(k+1)-\m{x}^\prime)=\m{P}_{\mc{TX}}\sum_{i=1}^n\alpha_i^{-1}\m{A}_{i}^\dag(\m{x}_i(0)-\m{x}^\prime)$, which is invariant for all $k\in \mb{Z}^+$. Consequently, the sequence $\{\m{x}(k)\}$ generated by \eqref{eq:constrained_consensus} is bounded.

\subsubsection{Convergence to a consensus} By summing up the inequalities in \eqref{eq:constrained_non_increasing_function} over $k$ from $0$ to $\infty$, we have
$
\sum_{k=0}^{\infty}||\m{x}(k+1)-\m{x}(k)||^2\leq \gamma_{\min}^{-1}\alpha_{\max}\big(V(\m{x}(0))-V(\m{x}(\infty))\big)\leq \gamma_{\min}^{-1}\alpha_{\max}V(\m{x}(0)).
$
 It follows that $||\m{x}(k+1)-\m{x}(k)||\rightarrow \m{0}$ or equivalently $\m{x}(k)\rightarrow \hat{\m{x}}:=[\hat{\m{x}}_1^\top,\ldots,\hat{\m{x}}_n^\top]^\top\in \mb{R}^{dn}$, as $k\rightarrow \infty$. Furthermore, from \eqref{eq:relative_state_project_to_Xi}, for an arbitrary point $\m{x}^\prime\in \mc{X}$, we have that \begin{align*}
 &\m{P}_{\mc{TX}_i}\textstyle\sum_{j\in \mc{N}_i}\big((\hat{\m{x}}_i-\m{x}^\prime)-(\hat{\m{x}}_j-\m{x}^\prime)\big)= \m{0},~\forall i\in \mc{V}
\\
\Leftrightarrow &|\mc{N}_i|(\hat{\m{x}}_i-\m{x}^\prime)=\textstyle\sum_{j\in \mc{N}_i}\m{P}_{\mc{TX}_i}(\hat{\m{x}}_j-\m{x}^\prime),~\forall i\in \mc{V}, \numberthis \label{eq:constrained_relation_at_limit}
\end{align*} 
where the last equality follows from $\m{P}_{\mc{TX}_i}(\hat{\m{x}}_i-\m{x}^\prime)=(\hat{\m{x}}_i-\m{x}^\prime),\forall i\in \mc{V}$.

We define the index set $\mc{I}:=\{i\in \mc{V}:i=\mathrm{argmax}_{i\in \mc{V}}||\hat{\m{x}}_i-\m{x}^\prime||\}$. Then, consider an agent $i\in \mc{I}$, we have 
\begin{align*}
\big\lVert\textstyle\sum_{j\in \mc{N}_i}\m{P}_{\mc{TX}_i}(\hat{\m{x}}_j-\m{x}^\prime)\big\rVert 
&\leq \textstyle\sum_{j\in \mc{N}_i}||\m{P}_{\mc{TX}_i}(\hat{\m{x}}_j-\m{x}^\prime)||\\
&\leq\textstyle\sum_{j\in \mc{N}_i}||\hat{\m{x}}_j-\m{x}^\prime|| \\
&\leq  |\mc{N}_i|||\hat{\m{x}}_i-\m{x}^\prime||,
\end{align*}
where the equality holds only if $\hat{\m{x}}_j\in \mc{X}_i$ and $||\hat{\m{x}}_j-\m{x}^\prime||=||\hat{\m{x}}_i-\m{x}^\prime||$, for all $j\in \mc{N}_i$. This combines with \eqref{eq:constrained_relation_at_limit} lead to $\hat{\m{x}}_j\equiv  \hat{\m{x}}_i,\forall j\in \mc{N}_i$, and consequently, $j\in \mc{I},\forall j \in \mc{N}_i$. By repeating the above argument for all agents $j\in \mc{I}$ until all the agents in the system have been visited (due to the connectedness of the graph $\mc{G}$), we obtain $\hat{\m{x}}_i\equiv \m{x}^*\in \mc{X},\forall i\in \mc{V}$.

The remainder of the proof is amount to finding an explicit expression for the consensus point $\m{x}^*$. Let $\bar{\m{A}}:=\sum_{i=1}^n\alpha_i^{-1}\m{A}_{i}^\dag\geq 0$. For an arbitrary point $\m{x}^\prime\in \mc{X}$, we have 
\begin{align*}
\m{P}_{\mc{TX}}\bar{\m{A}}(\m{x}^*-\m{x}^\prime)&=\m{P}_{\mc{TX}}\textstyle\sum_{i=1}^n\alpha_i^{-1}\m{A}_{i}^\dag(\m{x}_i(0)-\m{x}^\prime)\\
\Leftrightarrow \m{P}_{\mc{TX}}\bar{\m{A}}\m{P}_{\mc{TX}}(\m{x}^*-\m{x}^\prime)&=\m{P}_{\mc{TX}}\textstyle\sum_{i=1}^n\alpha_i^{-1}\m{A}_{i}^\dag(\m{x}_i(0)-\m{x}^\prime)
\end{align*} 
where we use the relation $\m{P}_{\mc{TX}}(\m{x}^*-\m{x}^\prime)=(\m{x}^*-\m{x}^\prime)$.
It is noted that $\mathrm{range}(\m{P}_{\mc{TX}})\subseteq \mathrm{range}(\bar{\m{A}})$ and hence $(\m{x}^*-\m{x}^\prime)\in\mathrm{range}(\m{P}_{\mc{TX}})= \mathrm{range}(\m{P}_{\mc{TX}}\bar{\m{A}}\m{P}_{\mc{TX}})$. Therefore, the consensus point $\m{x}^*$ is uniquely defined as
$$\m{x}^*=\m{x}^\prime+(\m{P}_{\mc{TX}}\bar{\m{A}}\m{P}_{\mc{TX}})^\dag\m{P}_{\mc{TX}}\sum_{i=1}^n\alpha_i^{-1}\m{A}_{i}^\dag(\m{x}_i(0)-\m{x}^\prime),$$
where $(\m{P}_{\mc{TX}}\bar{\m{A}}\m{P}_{\mc{TX}})^\dag$ is the Moore-Penrose inverse of $\m{P}_{\mc{TX}}\bar{\m{A}}\m{P}_{\mc{TX}}$.




\nocite{}
\bibliographystyle{IEEEtran}
\bibliography{quoc2018,quoc2019}

\begin{thebibliography}{10}
\providecommand{\url}[1]{#1}
\csname url@samestyle\endcsname
\providecommand{\newblock}{\relax}
\providecommand{\bibinfo}[2]{#2}
\providecommand{\BIBentrySTDinterwordspacing}{\spaceskip=0pt\relax}
\providecommand{\BIBentryALTinterwordstretchfactor}{4}
\providecommand{\BIBentryALTinterwordspacing}{\spaceskip=\fontdimen2\font plus
\BIBentryALTinterwordstretchfactor\fontdimen3\font minus
  \fontdimen4\font\relax}
\providecommand{\BIBforeignlanguage}[2]{{%
\expandafter\ifx\csname l@#1\endcsname\relax
\typeout{** WARNING: IEEEtran.bst: No hyphenation pattern has been}%
\typeout{** loaded for the language `#1'. Using the pattern for}%
\typeout{** the default language instead.}%
\else
\language=\csname l@#1\endcsname
\fi
#2}}
\providecommand{\BIBdecl}{\relax}
\BIBdecl

\bibitem{Ren2007}
W.~Ren and E.~Atkins, ``Distributed multi-vehicle coordinated control via local
  information exchange,'' \emph{Int. J. Robust Nonlinear Control}, vol.~17, pp.
  1002--1033, 2007.

\bibitem{Saber2004tac}
R.~Olfati-Saber and R.~M. Murray, ``Consensus problems in networks of agents
  with switching topology and time-delays,'' \emph{IEEE Trans. Autom. Control},
  vol.~49, no.~9, pp. 1520--1533, 2004.

\bibitem{Oh2015survey}
K.-K. Oh, M.-C. Park, and H.-S. Ahn, ``A survey of multi-agent formation
  control,'' \emph{Automatica}, vol.~53, pp. 424--440, 2015.

\bibitem{Quoc2020tcns}
Q.~V. Tran and H.-S. Ahn, ``Distributed formation control of mobile agents via
  global orientation estimation,'' \emph{IEEE Trans. Control Network Syst.},
  {DOI} 10.1109/{TCNS}.2020.2993253 (Early Access).

\bibitem{KZhang2018icml}
K.~Zhang, Z.~Yang, H.~Liu, T.~Zhang, and T.~Ba\c{s}ar, ``Fully decentralized
  multi-agent reinforcement learning with networked agents,'' in \emph{Proc.
  35th IEEE Confer. Machine Learning (ICML)}, 2018, pp. 5872--5881.

\bibitem{TYang2019}
T.~Yang, X.~Yi, J.~Wu, Y.~Yuan, D.~Wu, Z.~Meng, Y.~Hong, H.~Wang, Z.~Lin, and
  K.~H. Johansson, ``A survey of distributed optimization,'' \emph{Annual
  Reviews in Control}, vol.~47, pp. 278--305, 2019.

\bibitem{Quoc2020CDCextend}
Q.~V. Tran, Z.~Sun, B.~D.~O. Anderson, and H.-S. Ahn, ``Distributed computation
  of graph matching in multi-agent networks,'' 2020, arXiv:2002.08586
  [math.OC].

\bibitem{Tuna2016auto}
S.~E. Tuna, ``Synchronization under matrix-weighted laplacian,''
  \emph{Automatica}, vol.~73, pp. 76 -- 81, 2016.

\bibitem{Minh2018auto}
M.~H. Trinh, C.~V. Nguyen, Y.-H. Lim, and H.-S. Ahn, ``Matrix-weighted
  consensus and its applications,'' \emph{Automatica}, vol.~89, pp. 415--419,
  2018.

\bibitem{LPan2019}
L.~{Pan}, H.~{Shao}, M.~{Mesbahi}, Y.~{Xi}, and D.~{Li}, ``Bipartite consensus
  on matrix-valued weighted networks,'' \emph{IEEE Trans. Circuits Syst. II
  Express Briefs}, vol.~66, no.~8, pp. 1441--1445, 2019.

\bibitem{Ahn2019auto}
H.-S. Ahn and M.~H. Trinh, ``Consensus under biased alignment,''
  \emph{Automatica}, vol. 110, pp. 1--7, 2019.

\bibitem{Friedkin2016Sci}
N.~E. Friedkin, A.~V. Proskurnikov, R.~Tempo, and S.~E. Parsegov, ``Network
  science on belief system dynamics under logic constraints,'' \emph{Science},
  vol. 354, no. 6310, pp. 321--326, 2016.

\bibitem{Ahn2020socialSystems}
H.-S. Ahn, Q.~V. Tran, M.~H. Trinh, M.~Ye, J.~Liu, and K.~L. Moore, ``Opinion
  dynamics with cross-coupling topics: Modeling and analysis,'' \emph{IEEE
  Trans. Comput. Social Syst.}, {D}OI: 10.1109/TCSS.2020.2974899 (Early
  Access).

\bibitem{Ben2020auto}
M.~Ye, M.~H. Trinh, Y.-H. Lim, B.~D. Anderson, and H.-S. Ahn, ``Continuous-time
  opinion dynamics on multiple interdependent topics,'' \emph{Automatica}, vol.
  115, p. 108884, 2020.

\bibitem{Barooah2008TSP}
P.~Barooah and J.~P. Hespanha, ``Estimation from relative measurements:
  {E}lectrical analogy and large graphs,'' \emph{IEEE Trans. Signal Process.},
  vol.~56, no.~6, pp. 2181--2193, 2008.

\bibitem{Zhao2018csm}
S.~Zhao and D.~Zelazo, ``Bearing rigidity theory and its applications for
  control and estimation of network systems: Life beyond distance rigidity,''
  \emph{IEEE Control Syst. Mag.}, vol.~39, no.~2, pp. 66--83, 2019.

\bibitem{Tuna2018tac}
S.~E. Tuna, ``Observability through a matrix-weighted graph,'' \emph{IEEE
  Trans. Autom. Control}, vol.~63, no.~7, pp. 2061--2074, 2018.

\bibitem{LPan2020Arx}
L.~Pan, H.~Shao, M.~Mesbahi, Y.~Xi, and D.~Li, ``Consensus on matrix-weighted
  time-varying networks,'' 2020, arXiv:2001.11179 [eess.SY].

\bibitem{Chinnappa2019}
Y.~{Chinnappa} and D.~{Constantinescu}, ``Average consensus in
  matrix-weight-balanced digraphs,'' in \emph{IEEE 15th Int. Confer. Automation
  Science and Engineering}, 2019, pp. 1637--1642.

\bibitem{Jadbabaie2003tac}
A.~Jadbabaie, J.~Lin, and A.~S. Morse, ``Coordination of groups of mobile
  autonomous agents using nearest neighbor rules,'' \emph{IEEE Trans. Autom.
  Control}, vol.~48, no.~6, pp. 988--1001, 2003.

\bibitem{YSu2012auto}
Y.~Su and J.~Huang, ``Two consensus problems for discrete-time multi-agent
  systems with switching network topology,'' \emph{Automatica}, vol.~48, pp.
  1988--1997, 2012.

\bibitem{Liu2004LinearAlgeb}
H.~Liu, M.~Lu, and F.~Tian, ``On the laplacian spectral radius of a graph,''
  \emph{Linear Algebra and its Applications}, vol. 376, pp. 135 -- 141, 2004.

\bibitem{Horn1985}
R.~A. Horn and C.~R. Johnson, \emph{Matrix Analysis}.\hskip 1em plus 0.5em
  minus 0.4em\relax Cambridge, U.K.: Cambridge University Press, 1985.

\bibitem{Bertsekas1989}
D.~P. Bertsekas and J.~N. Tsitsiklis, \emph{Parallel and {D}istributed
  {C}omputation: {N}umerical {M}ethods}.\hskip 1em plus 0.5em minus 0.4em\relax
  USA: Prentice-Hall, Inc., 1989.

\end{thebibliography}

\end{document}